%% file: PaperC-revision.tex
\documentclass{article}
\input{preamble.tex}
\begin{document}
\title{Coarse geometry of homeomorphism groups: Classifying countable Stone spaces}
\author{George Domat\footnote{Department of Mathematics, University of Michigan, Ann Arbor, MI, USA. ORCID ID: 0000-0003-1775-5349}, Hannah Hoganson\footnote{Department of Mathematics, University of Maryland, College Park, MD, USA. ORCID ID: 0009-0009-6933-5064} and Robert Alonzo Lyman\footnote{Department of Mathematics and Computer Science, Rutgers University--Newark, Newark, NJ, USA. ORCID ID: 0000-0002-8083-4747 \\ Corresponding author: \texttt{robbie.lyman@rutgers.edu}}}

\maketitle

\begin{abstract}
    Towards developing the tools of geometric group theory for non-locally compact topological groups, we give one of the first complete classifications of a family of such groups up to coarse equivalence, and when possible, up to quasi-isometry. 
    In a previous paper, we placed the homeomorphism groups of countable Stone spaces into three classes: coarsely bounded, unbounded yet generated by a coarsely bounded set, and unbounded but not generated by any coarsely bounded set. 
    Now we show that these are the coarse equivalence classes: Any two groups within one of these classes are in fact coarsely equivalent. 
    
    Furthermore, we show that groups in the second class are quasi-isometric to the \emph{Hamming cube,} the space comprising infinite binary sequences with finitely many nonzero entries equipped with the Hamming distance, and that groups in the third class are coarsely equivalent to the set of leaves of the regular one-ended tree of countably infinite valence.
    As part of the proof, we show that infinite Hamming graphs over finite alphabets are all bi-Lipschitz equivalent and prove a coarse geometric classification result for topological groups admitting exhaustions by proper, open, coarsely bounded subgroups.
\end{abstract}

\addtocontents{toc}{\protect\setcounter{tocdepth}{0}}
\section{Introduction}

Countable Stone spaces are classified up to homeomorphism by two pieces of data: their Cantor--Bendixson rank, which is a countable ordinal $\alpha$, and an integer $n \ge 1$, which is the number of points of maximal rank. 
We write $X_{\alpha,n}$ to denote the corresponding Stone space and study its group of symmetries, $\Homeo(X_{\alpha,n})$, which is a topological group equipped with the compact-open topology.
In \cite[Theorem A]{BDHL2025}, we showed that these groups all have metrizable coarse structures that fall into three categories: coarsely bounded, unbounded and generated by a coarsely bounded subset, or unbounded but not generated by any coarsely bounded subset. In this paper we show that these are also the coarse equivalence classes.

\begin{customthm}{A}
  \label{thm:mainclassification}
  Let $X_{\alpha,n}$ be a countable Stone space with $\alpha>0$.
  The groups $\Homeo(X_{\alpha,n})$ equipped with the compact-open topology fall into three coarse equivalence classes, which are determined by $\alpha$ and $n$ as follows.
  \begin{enumerate}
      \item If $n=1$, then for all $\alpha$, $\Homeo(X_{\alpha,1})$ is coarsely bounded and hence quasi-isometric to a point.
      \item If $n>1$ and $\alpha$ is a successor ordinal, then $\Homeo(X_{\alpha,n})$ is quasi-isometric to the countably infinite Hamming cube.  
      \item If $n>1$ and $\alpha$ is a limit ordinal, then $\Homeo(X_{\alpha,n})$ is coarsely equivalent to the set of leaves of the regular one-ended tree of countably infinite valence.
  \end{enumerate}
\end{customthm}

In particular, all unbounded groups that are generated by a coarsely bounded subset fall into the second category and are quasi-isometric to each other. This gives perhaps the first complete quasi-isometric classifications for an infinite family of non-locally compact topological groups.

Moreover, our result also supplies explicit geometric models. 
The \emph{countably infinite Hamming cube} is the graph whose vertices correspond to binary sequences with finitely many $1$'s. Two vertices are connected by an edge if they differ in a single coordinate. Geometrically, this is the $1$-skeleton of a connected component of an infinite-dimensional cube. It includes all of the finite Hamming cubes as subgraphs. The \emph{regular one-ended tree of countably infinite valence}
has countably infinitely many leaf vertices,
all non-leaf vertices have countably infinite valence,
and every pair of geodesic rays in the tree has a common tail.

Prior to this work, explicit descriptions of coarse structures associated to non-locally compact topological groups were only known in the case of automorphism groups of locally infinite trees and the isometry group of Urysohn space \cite[Section 6.5]{Rosendal}, homeomorphism groups of compact manifolds \cite{MR2018} and of certain fractals \cite{DuchesneTarocchi}, and the mapping class group of the plane minus a Cantor set \cite{CalegariBlog,Bavard2016,MR2023,SC2024}. 

As a corollary of our work, we have the following.

\begin{customcor}{B}
    Let $X_{\alpha,n}$ be a countable Stone space.
    The asymptotic dimension of the group $\Homeo(X_{\alpha,n})$ equipped with the compact--open topology is infinite if $\alpha$ is a successor ordinal and $n > 1$ and is $0$ otherwise. Furthermore, when $\alpha$ is a successor ordinal and $n > 1$, the group $\Homeo(X_{\alpha,n})$ has infinite coarse rank. 
\end{customcor}

\begin{proof}
    When $n > 1$ and $\alpha$ is a limit ordinal, the asymptotic dimension is $0$ because the group is coarsely equivalent to the set of leaves of a tree.
    When $n > 1$ and $\alpha$ is a successor ordinal, by the second statement of \Cref{thm:mainclassification} it suffices to prove that $\Homeo(X_{1,2})$ (see below) has infinite coarse rank. 
    This follows, mutatis mutandis, from arguments using commuting shift maps given in \cite[Section 9.2]{DHK2023} and \cite[Sections 5 \& 6]{GRV2021}.
    Alternatively, with some work, one can directly construct quasi-isometric embeddings of arbitrarily large flats into the countably infinite Hamming cube.
    In all other cases, the group $\Homeo(X_{\alpha,n})$ is coarsely bounded.
\end{proof}

The third statement of \Cref{thm:mainclassification} is a consequence of a more general result.

\begin{customthm}{C} \label{thm:maintrees}
    Let $G$ be a topological group such that $G = \bigcup_{k=1}^{\infty} G_k$, where each $G_k$ is a proper, open subgroup coarsely bounded in $G$. There exists a one-ended tree $T$ so that $G$ equipped with its canonical coarse structure is coarsely equivalent to the set of leaves of $T$ equipped with the subspace metric.
\end{customthm}

This result provides coarse equivalences between \emph{a priori} distinct groups. For example, this theorem recovers the results of \cite[Sections 6 \& 9]{DHK2023} on certain pure mapping class groups of infinite graphs. We also state and prove an analogue of \Cref{thm:maintrees} for topological groups $G$ which are countably generated over a proper, open subgroup coarsely bounded in $G$, which may be of independent interest; see \Cref{DEF:TreeofCB} and \Cref{prop:orbit-map-CE}. 

\subsection{A guiding example}

The Stone space $X_{1,2}$ is homeomorphic to the 2-point compactification of $\Z$, denoted $\hat{\Z}=\Z\cup \{\pm \infty\}$. 
The points $\pm \infty$ are what will be called \emph{maximal points} because they are accumulation points, while all other points are isolated in the topology. 
To illustrate the main ideas of this paper, we describe a graph $\G$ that captures the quasi-isometry type of $\Homeo(\hat{\Z})$ and is isomorphic to the Hamming cube. A general proof is done rigorously in \Cref{sec:CARtoHam}.

The vertices of $\G$ correspond to partitions of $\hat{\Z}$ into two clopen sets separating $-\infty$ and $+\infty$. Two vertices are connected by an edge if one partition can be constructed from the other by moving a singleton. 
The prototypical example of adjacent vertices $\mathcal{P}$ and $\mathcal{R}$ is given by 
\[
P_- = \{ x \le 0 \} \quad\text{and}\quad P_+ = \{x > 0\} \qquad\text{while}\qquad R_- = \{ x < 0\} \quad\text{and}\quad R_+ = \{ x \ge 0\}.
\]
Here $P_{-}=R_{-}\sqcup \{0\}$ and $P_{+}\sqcup \{0\}=R_+$. 
These are basic examples of what we call \emph{dividing partitions} and what it means for two partitions to \emph{differ by a shift}. See \Cref{fig:Gamma2_partitions} for an example of four vertices in $\Gamma$ and the corresponding edges between them. 

\begin{figure}[ht!]\centering
        \def\svgwidth{\textwidth}
        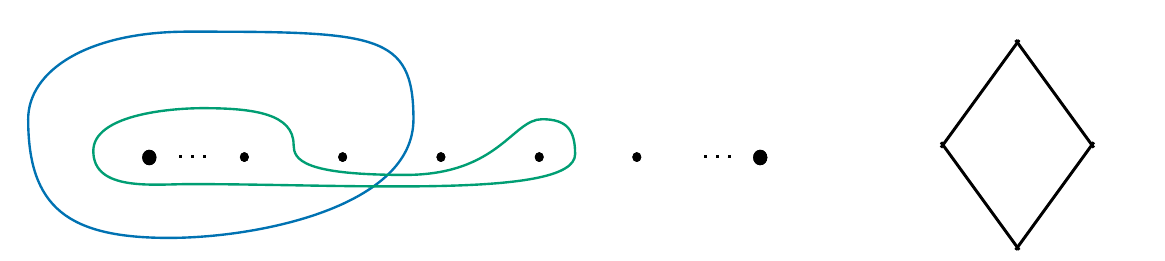
		\caption{A small piece of the graph $\Gamma$. The partitions $\mathcal{P}$ and $\mathcal{R}$ defined above are labeled in blue and pink, respectively. Partitions are represented as Jordan curves in the plane so that the interior of the curve contains $-\infty$.}
        \label{fig:Gamma2_partitions}
\end{figure} 

Because homeomorphisms preserve the set of maximal points, the group $\Homeo(\hat{\mathbb{Z}})$ acts on $\Gamma$; in fact it acts continuously. A homeomorphism $f$ acts by sending a partition $\mathcal{P} = P_- \sqcup P_+$ to the induced partition
\[ f.\mathcal{P} = \{ f(x) : x \in P_-\} \sqcup \{f(x) : x \in P_+\}. \]
In fact, in \cite{BDHL2025} we showed that $\G$ is a \emph{Cayley--Abels--Rosendal graph} for $\Homeo(\hat{\Z})$, meaning that the group has a canonical quasi-isometry type which is captured by the graph. 

Now we aim to see that $\G$ is isometric to a Hamming graph. Fix a basepoint partition, $\mathcal{P}_0$, say the partition with $P_{-}=\{x \leq 0\}$ and $P_+=\{x>0\}$ from above. 
Let $\mathcal{Q}=Q_{-}\sqcup Q_+$ be any other partition. Because the partitions are clopen sets, there is an $N >0$ such that \[Q_{-}\cap[-\infty,-N]=P_{-}\cap[-\infty,-N] \quad \text{and}\quad Q_+\cap[N,\infty]=P_+\cap[N,\infty].\] 
In particular, $\mathcal{P}_0$ and $\mathcal{Q}$ differ on finitely many points. Moving these points one at a time shows that the number of such points is exactly their distance in $\G$. In fact, recording the points that differ between $\mathcal{P}_0$ and $\mathcal{Q}$ allows us to label the vertices of $\Gamma$ by finite subsets of $\Z$. 
Two vertices are adjacent when their corresponding subsets differ by a single element. For example, the partition $\mathcal{R}$ given above corresponds to the subset $\{0\}$.
This viewpoint shows that $\Gamma$ is naturally isomorphic to the poset graph of finite subsets of $\Z$. See \Cref{fig:Gamma2_Zsubsets} for an illustration of this. 

\begin{figure}[ht!]\centering
        \def\svgwidth{.8\textwidth}
        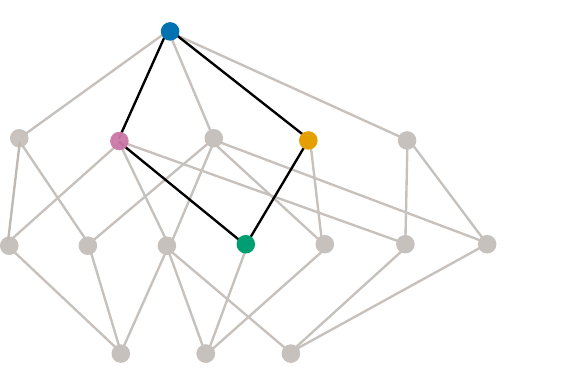
		\caption{A piece of the graph $\Gamma$ realized as the poset graph of finite subsets of $\Z$. The square piece highlighted in color is the square of $\Gamma$ pictured in \Cref{fig:Gamma2_partitions}.}
        \label{fig:Gamma2_Zsubsets}
\end{figure} 

The characteristic function of a subset $S \subset \mathbb{Z}$ allows us to think of vertices of the poset graph as $\mathbb{Z}$-indexed sequences in $\{0,1\}$.
In this perspective, an edge of $\Gamma$ corresponds to two sequences differing in one coordinate; thus $\Gamma$ is a Hamming graph.
See \Cref{fig:Gamma2_hamming} for a realization of a piece of $\Gamma$ as this Hamming graph. 
Finally, fixing any bijection $\Z \arr \omega$ allows one to reindex the vertex set and realize it as the standard countably infinite Hamming cube.

\begin{figure}[ht!]\centering
        \def\svgwidth{.7\textwidth}
        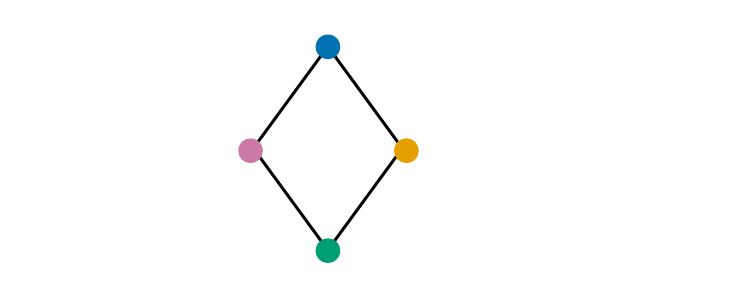
		\caption{A piece of the graph $\Gamma$ realized as the countably infinite Hamming cube. This shows the same square of $\Gamma$ pictured in \Cref{fig:Gamma2_partitions}. The check marks indicate the $0$th coordinate. }
        \label{fig:Gamma2_hamming}
\end{figure}

\subsection{History and motivation}

Much of modern geometric group theory has developed around Gromov's \cite{Gromov1987,Gromov1993} program of studying a group via its \emph{coarse geometric} properties. Traditionally this has focused on finitely generated (discrete) groups. This is because finitely generated groups admit well-defined quasi-isometry classes of left-invariant metrics, namely those given by the word metric for any (and hence all) finite generating sets. When moving to \textit{topological} groups, which need not be finitely generated, one can ask for a canonical class of left-invariant metrics that additionally generate the group topology, or at least are continuous with respect to it. Gromov's program was first extended to the class of locally compact topological groups (see e.g. \cite{CdH2016}). Here \emph{compact} generating sets and their associated word metrics yield the canonical quasi-isometry type, and although the word metric is always discrete, it is quasi-isometric to a metric that is compatible with the group topology.

More recently, Rosendal \cite{Rosendal}, building on work of Roe \cite{Roe}, has extended this framework to general topological groups. The substitute for finiteness or compactness here is the property of being \textit{coarsely bounded}. 
A subset of a topological group is \emph{coarsely bounded}, or CB, if it has finite diameter in any continuous (pseudo-)metric on the group. 
If a Polish group has a CB neighborhood of the identity, i.e. if it is \emph{locally CB}, then it has a metrizable coarse structure. Additionally, if a Polish topological group admits a CB generating set, the word metric associated to the closure of this generating set (and any analytic CB generating set) gives a well-defined quasi-isometry class of metrics on the group. 
Furthermore, this class has a representative that generates the given topology on the group. See also~\cite[Sections 1 \& 2]{BDHL2025} for a survey on this perspective. 

In our previous paper~\cite{BDHL2025}, we introduced \emph{Cayley--Abels--Rosendal graphs} for topological groups. Similar to the Cayley graph of a discrete group with respect to a finite generating set, Cayley--Abels--Rosendal graphs provide a concrete geometric model for the quasi-isometry type of a topological group that admits one. Geometric group theorists have long used Cayley graphs to great effect, and this paper continues in that tradition.

When a group $G$ has metrizable coarse structure but is not generated by a coarsely bounded set, no Cayley--Abels--Rosendal graph exists, but it is still desirable to have a geometric model for the coarse structure on the group. This was done for non-Archimedean Polish groups by Kopreski--Shaji~\cite{KopreskiShaji}, who introduced \emph{coarse Cayley--Abels--Rosendal} graphs, which are metric graphs with a discrete set of edge lengths. The vertex set of a coarse Cayley--Abels--Rosendal graph $\G$ for a group $G$, when equipped with the subspace metric, is coarsely equivalent to $G$. 

In \Cref{ssec:generaltrees}, building on Kopreski--Shaji's work and using Bass--Serre theory, we construct a countable graph that allows us to compute the coarse structure of $G$. This graph has infinitely many orbits of vertices, which are naturally organized by an exhaustive, countably infinite chain of proper, open subgroups. Nonetheless, the orbit of any vertex still captures the coarse structure of $G$.

\subsubsection{The discrete topology} The statement of \Cref{thm:mainclassification} specifies that $\Homeo(X_{\alpha,n})$ is equipped with the compact-open topology. For a topological group, when we write ``quasi-isometric,'' this should always be taken in reference to a maximal \emph{continuous} left-invariant (pseudo-)metric. Quasi-isometry is thus a \emph{topological group} invariant rather than an \emph{abstract} group invariant. Asking for the latter is equivalent to considering coarse structures, \`{a} la \cite{Rosendal}, on groups equipped with the \emph{discrete} topology. The notion of coarse boundedness (CB) in a discrete group is also referred to as \emph{strong boundedness} (SB). This property was first studied by Bergman \cite{Bergman2006} in the context of infinite symmetric groups (e.g. $\Homeo(X_{1,1})$) and has since been expanded on in \cite{Vlamis2026}. In particular, an SB-generated group  also admits a well-defined quasi-isometry class of metrics. In \cite{Vlamis2026}, building on work in \cite{BCMVVV2024}, Vlamis shows that when $\alpha$ is a successor ordinal, $\Homeo(X_{\alpha,n})$ is SB-generated. In fact, Vlamis's work shows that the coarsely bounded generating sets given in~\cite{BDHL2025} are strongly bounded. Thus, combining \Cref{thm:mainclassification} with \cite[Theorem 6.1]{Vlamis2026} we can obtain the following stronger result. 

\begin{customcor}{D}\label{cor:sbclassification}
    If $X_{\alpha,n}$ is a countable Stone space with $\alpha>0$ a successor ordinal and $n>1$, then $\Homeo(X_{\alpha,n})$ equipped with the discrete topology is quasi-isometric to the countably infinite Hamming cube. 
\end{customcor}

\subsubsection{Connections to surface mapping class groups}

One motivation for this project comes from attempts at obtaining a quasi-isometry classification for mapping class groups of infinite-type surfaces; see for example Problem 2.46(b) of the new Kirby problem list \cite{BKR2026}. 
When equipped with the (quotient of) the compact-open topology, these groups are no longer finitely generated and discrete, but instead are non-locally compact topological groups. By the work of Mann--Rafi \cite{MR2023}, it is known that many such mapping class groups are CB-generated and hence admit well-defined quasi-isometry classes of metrics. However, little is known about the quasi-isometry classification of these groups. So far, one can distinguish those mapping class groups that are themselves CB from those that are not~\cite{MR2023,RM2023} and some examples that are hyperbolic groups~\cite{SC2024}. Some quasi-isometry invariants such as asymptotic dimension, coarse rank, and divergence bounds are also known~\cite{GRV2021,KopreskiShaji,BNQR2026}. A full classification, however, is nowhere near in sight.

The end spaces of surfaces are second countable Stone spaces. In fact, every such Stone space is the end space of an infinite-type surface~\cite{Kerekjarto1923,Richards1963OnTC}. Mapping class groups act on their end spaces continuously by homeomorphisms. Therefore, given an infinite-type surface $S$ with end space $E(S)$ and subspace of ends accumulated by genus $E_g(S)$, we have the following short exact sequence of topological groups.
\begin{align*}
    1 \longrightarrow \PMCG(S) \longrightarrow \MCG(S) \longrightarrow \Homeo(E(S),E_g(S)) \longrightarrow 1
\end{align*}
where $\MCG(S)$ denotes the mapping class group, $\PMCG(S)$ denotes the closed subgroup of ``pure'' mapping classes, and $\Homeo(E(S),E_g(S))$ denotes the closed subgroup of $\Homeo(E(S))$ that preserves the closed subset $E_g(S) \subset E(S)$.  This suggests that a natural starting point for understanding the coarse geometric structure of $\MCG(S)$ is to first understand the coarse geometric structure of $\Homeo(X)$ for $X$ a second countable Stone space. 

Our classification does not imply an analogous quasi-isometric classification for mapping class groups. In \Cref{sec:surfaces}, we see that the quotient map given above is \emph{not} a quasi-isometry (\Cref{lem:forgetfulsurface}). 
Unlike in the group $\Homeo(X_{\alpha,n})$, we also show that the Cantor--Bendixson derivative on end spaces does \emph{not} induce a quasi-isometry between mapping class groups (\Cref{lem:canbendsurface}), even when $E_g(S) = \varnothing$. These results, together with \Cref{thm:mainclassification}, suggest that one \emph{must} make use of the topology/geometry of the surface when classifying mapping class groups up to quasi-isometry. 

The results in \Cref{sec:surfaces} do not cover all infinite-type surfaces. Curiously, the arguments fail for so-called ``translatable'' or ``avenue'' surfaces (\`{a} la \cite{SC2024} and \cite{HQR2022}, respectively). One such example of these surfaces is $\Sigma_{1,2}$, the zero genus surface with end space homeomorphic to $X_{1,2}$. This raises the following question. Here $\zeta(\Sigma)$ is a measure of complexity as defined in \cite{BNQR2026}. 

\begin{customq}{1}
\label{q:surface}
    Let $\Sigma$ be a stable surface with $\zeta(\Sigma)=2$ whose mapping class group is CB-generated but not coarsely bounded. Is $\PMCG(\Sigma)$ coarsely bounded in $\MCG(\Sigma)$? 
    
    More specifically, let $\Sigma_{1,2}$ be the genus-zero surface with end space $X_{1,2}$. Is $\PMCG(\Sigma_{1,2})$ coarsely bounded inside of $\MCG(\Sigma_{1,2})$? 
\end{customq}

\subsubsection{Connections to graph mapping class groups}

Our results extend more directly to the setting of mapping class groups of locally finite, infinite graphs. We refer the reader to \cite{AKB2025} for the relevant definitions. The mapping class group of a tree is isomorphic (as a topological group) to the homeomorphism group of its end space, i.e. a second countable Stone space. In this sense, these mapping class groups can be seen as direct extensions of the groups studied here. Additionally, \cite{DHK2023,DHK2025} show that the pure mapping class groups of many graphs do admit a well-defined quasi-isometry class of metrics. 
We make use of \Cref{thm:maintrees} to see that a class of graphs studied in \cite[Section 6 \& Section 9]{DHK2023} has pure mapping class groups coarsely equivalent to some of the groups studied here. 
In particular, these graphs have corresponding pure mapping class groups that are all coarsely equivalent. We do not recall all of the definitions pertaining to graph mapping class groups and instead direct the reader to \cite{DHK2023}.

\begin{customcor}{E}\label{cor:graphs}
    Let $\lambda$ be a limit ordinal, $n>1$, and $\Gamma$ a locally finite graph with exactly one end accumulated by loops and infinitely many other ends. The groups $\Homeo(X_{\lambda,n})$ and $\PMCG(\Gamma)$ are coarsely equivalent. 
\end{customcor}
\begin{proof}
    Letting $\ell$ be the full length function constructed in \cite[Proposition 6.4 \& Lemma 9.6]{DHK2023}. For each $k \in \N$ let $G_k < \PMCG(\Gamma)$ be given by $\ell^{-1}([0,k])$. By \cite[Proposition 6.4 \& Lemma 9.7]{DHK2023} this defines an exhaustion of $\PMCG(\Gamma)$ by proper, open, CB subgroups. 
    Thus, we can apply \Cref{thm:maintrees}. 
    By construction, each $G_k$ has countably infinite index in $G_{k+1}$ so the tree given by \Cref{thm:maintrees} is the regular one-ended tree of countably infinite valence. 
    Thus we conclude that $\PMCG(\Gamma)$ and $\Homeo(X_{\lambda,n})$ are coarsely equivalent. 
\end{proof}

In addition to this explicit application one may also ask: To what degree do the general methods of this paper extend to graph mapping class groups? 

\begin{customq}{2}
    Let $\Gamma$ be a locally finite, infinite graph with isolated ends not accumulated by loops. Let $\Gamma'$ be the graph obtained from $\Gamma$ by forgetting all such isolated ends. Does this forgetful map induce a quasi-isometry between $\MCG(\Gamma)$ and $\MCG(\Gamma')$? 
\end{customq}

This forgetful map is an analogue of the Cantor--Bendixson derivative and by \Cref{cor:CantorBendixson-QI} we know that the answer to this question is ``yes'' when $\Gamma$ is a tree with countable end space. However, by \Cref{lem:canbendsurface}, we know that the answer to the analogous question in the surface case is ``no.''

\subsection{Organization and outline}
Here we provide both an outline of the paper and of the proof of \Cref{thm:mainclassification}. First, in \Cref{sec:background} we recall background notions of coarse geometry and aspects of our work in \cite{BDHL2025} necessary for this paper.

In \Cref{sec:derivative}, mostly using ideas from \cite{BDHL2025}, we show that the Cantor--Bendixson derivative on a countable Stone space induces a coarse equivalence of homeomorphism groups. For successor ordinals, this immediately allows one to reduce \Cref{thm:mainclassification} to the case where $\alpha = 1$. We tackle the rest of the successor ordinal case in \Cref{sec:successcase}. Here we first exhibit isomorphisms between the Cayley--Abels--Rosendal graphs built in \cite{BDHL2025} with infinite Hamming graphs over finite alphabets (\Cref{sec:CARtoHam}). Then we study the geometry of these Hamming graphs and show that they are all bi-Lipschitz equivalent (\Cref{sec:HammingCubes}), thus finishing the proof of \Cref{thm:mainclassification} for successor ordinals. 

Next we turn to limit ordinals in \Cref{sec:limitcase}. Since the corresponding homeomorphism groups are no longer CB generated, they do not admit Cayley--Abels--Rosendal graphs. 
When $\lambda$ is a limit ordinal, there is a family of proper, open subgroups $G_k \le \Homeo(X_{\lambda,n})$ that exhaust $\Homeo(X_{\lambda,n})$ and are coarsely bounded in $\Homeo(X_{\lambda,n})$.
Bass--Serre theory provides an action of $\Homeo(X_{\lambda,n})$ on a 1-ended tree with all vertex stabilizers given by the $G_k$.

Each subgroup $G_k$ has countably infinite index in $G_{k+1}$. By \Cref{thm:maintrees}, the set of leaves of the regular one-ended tree of countably infinite valence
equipped with the subspace metric
therefore models the coarse structure on $\Homeo(X_{\lambda,n})$
for every limit ordinal $\lambda$.


The methods in \Cref{ssec:generaltrees} also apply to groups $G$ that are countably generated over an open subgroup $G_0$ coarsely bounded in $G$.
In this case, adding generators one at a time produces the groups $G_k$ in the countable exhaustion of $G$.
We show that one can blow up the vertices of the Bass--Serre tree in order to produce a geometric model for the group even when the $G_k$ are not coarsely bounded in $G$, see \Cref{DEF:TreeofCB} and \Cref{prop:orbit-map-CE}.
These constructions are similar to the construction of coarse Cayley--Abels--Rosendal graphs of \cite{KopreskiShaji} and we make use of a number of their tools.

If $n = 1$, we showed in~\cite[Corollary 14]{BDHL2025} that the group $\Homeo(X_{\alpha,1})$ is coarsely bounded. Thus, groups in this class are all coarsely equivalent to a point, and we have finished the proof of \Cref{thm:mainclassification}. To conclude we discuss connections to surfaces in \Cref{sec:surfaces}.

\subsection*{Acknowledgments}
The authors thank Christian Rosendal, Nick Vlamis, Michael Kopreski, and Beth Branman for many helpful conversations. 
The authors thank Brian Udall for pointing out simplifications in \Cref{sec:limitcase} and related ideas leading to \Cref{thm:maintrees}.
Hannah Hoganson was partially supported by NSF grant DMS--2303365 (MSPRF). George Domat was partially supported by NSF grant DMS--2303262 (MSPRF).

\subsubsection*{AI Acknowledgment} 
The statements and the outline of proofs in \Cref{sec:HammingCubes} were obtained by prompting an LLM, ChatGPT 5.5-Pro, and suggested to us by Christian Rosendal. 
The proofs in \Cref{sec:HammingCubes} were written by the authors upon reading the output provided by this prompting and the authors take full responsibility for the content of the paper.

\tableofcontents 

\addtocontents{toc}{\protect\setcounter{tocdepth}{3}}

\section{Background}\label{sec:background}
This section contains background material necessary for what follows. There is a brief review of the machinery of Roe~\cite{Roe} and Rosendal~\cite{Rosendal} for coarse geometry on topological groups. We discuss ordinals to fix notation, then we review the spaces $X_{\alpha,n}$ and the graphical models for their homeomorphism groups constructed in \cite{BDHL2025}.

\subsection{Coarse geometry}

We direct the reader to Rosendal's book \cite{Rosendal} for more thorough background on coarse geometry for general topological groups (see also \cite[Sections 1 \& 2]{BDHL2025}, \cite{Robbie} or \cite{KopreskiShaji} for an abridged overview). We remind the reader only of the basics, partially to set notation.

Every topological group $G$ admits a canonical coarse structure (\`{a} la \cite{Roe}), the \emph{left-coarse structure,} by considering all possible continuous left-invariant (pseudo)-metrics on $G$; equivalently all continuous, isometric actions of $G$ on metric spaces. A subset $A \subset G$ is \emph{coarsely bounded}, or CB, if it is bounded in this coarse structure. This means that $A$ has bounded diameter in any continuous left-invariant pseudometric on $G$. See \cite[Proposition 2.15]{Rosendal} for other characterizations. 

Adding the assumption that $G$ is Polish (meaning separable and completely metrizable), the coarse structure on $G$ is metrizable if and only if $G$ has a coarsely bounded neighborhood of the identity, i.e. $G$ is \emph{locally CB} \cite[Theorem 2.38]{Rosendal}. 
If $G$ is locally CB, then, by separability of $G$, it is also countably generated over a CB neighborhood of the identity. That is, there exists a CB neighborhood of the identity $H$ in $G$ and a countable set $S$ in $G$ so that $H$ and $S$ algebraically generate $G$. 
Furthermore, $G$ admits a well-defined quasi-isometry class of metrics if and only if $G$ is generated by a coarsely bounded set \cite[Proposition 2.72 and Theorem 2.73]{Rosendal}, which in the case of Polish groups may be taken to be a CB neighborhood of the identity $H$ together with a \emph{finite} set $S$.

A map $f\colon (X,d_X) \to (Y,d_Y)$ between metric spaces is a \emph{quasi-isometry} when there exist positive constants $(L,K,C)$ such that for all $x$ and $y$ in $X$, we have

\[
\frac{1}{L} d_X(x,y) - K \le d_Y(f(x),f(y)) \le L d_X(x,y) + K
\]
and for each $y \in Y$ there exists $x \in X$ such that $d_Y(f(x),y) \le C$.

The latter condition above says that the map $f$ is \emph{cobounded.} Replacing the affine upper control $t \mapsto Lt + K$ in the right-most inequality with an arbitrary increasing function that tends to infinity, $\eta \colon \mathbb{R} \to \mathbb{R}$ of $d_X(x,y)$, gives the definition of a \emph{bornologous} map of metric spaces. Doing the same for the inequality $\frac{1}{L} d_X(x,y) - K \le d_Y(f(x),f(y))$, one defines an \emph{expanding} map.

A \emph{coarse equivalence} of metric spaces is a map that is bornologous, expanding and cobounded. 
Since our topological groups have metrizable coarse structure, these definitions suffice to characterize coarse equivalence. 
When a Polish group is CB generated, coarse equivalence turns out to recover the notion of quasi-isometry. Indeed, by \cite[Theorem 2.40]{Rosendal} and \cite[Proposition 2.57]{Roe}, when a Polish group is CB generated, its coarse structure can be realized by a quasi-geodesic metric space. 
By \cite[Lemma 2.66]{Rosendal} a coarse equivalence between quasi-geodesic metric spaces is in fact a quasi-isometry.

Here is one final general proposition that we will make use of repeatedly. 

\begin{prop}\cite[Proposition 4.37]{Rosendal}\label{prop:sesce}
    Let $K$ be a closed normal subgroup of a Polish group $G$. The quotient map $G \rightarrow G/K$ is a coarse equivalence if and only if $K$ is coarsely bounded in $G$. If $G$ is generated by a CB set, then the quotient map is a quasi-isometry.
\end{prop}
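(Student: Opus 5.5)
Throughout, fix a continuous left-invariant pseudometric $d$ on $G$; since $G$ is Polish, we may take $d$ to be compatible and also left-invariant. Write $\pi\colon G\to G/K$ for the quotient map. Equip $G/K$ with the quotient pseudometric
\[
\bar d(xK,yK)=\inf_{k,k'\in K} d(xk,yk') = \inf_{k\in K} d(x,yk),
\]
where the second equality uses left-invariance of $d$. This is a continuous left-invariant pseudometric on $G/K$. Every continuous left-invariant pseudometric on $G/K$ pulls back along $\pi$ to a continuous left-invariant pseudometric on $G$. Consequently a set $A\subset G/K$ is CB exactly when its preimage has bounded diameter in every pulled-back metric, and a set $B\subset G$ satisfies: $\pi(B)$ is CB in $G/K$ iff $BK$ is bounded for all such pulled-back metrics.

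\textbf{The forward direction.} Suppose $\pi$ is a coarse equivalence. Then $\pi$ is expanding. Every element of $K$ maps to the identity coset, so for each $k \in K$ the distance $\bar d(\pi(k),\pi(1))$ is $0$. Expansion then gives a uniform bound on $d(k,1)$ for every $d$ in the coarse structure. Hence $K$ is CB.

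\textbf{The reverse direction.} Suppose $K$ is CB. The map $\pi$ is surjective, hence cobounded, and it is $1$-Lipschitz for $\bar d$, hence bornologous. The key step is expansion. If $\bar d(xK,yK)\le R$, then $x^{-1}yk$ lies in the ball of radius $R$ for some $k\in K$. So $x^{-1}y$ lies in $B_R K$, whose $d$-diameter is at most $2R+\operatorname{diam}_d K$.

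There is a subtlety. We need the relevant bounded sets of $G/K$ to be exactly the images of bounded sets of $G$. This is the content of Rosendal's characterization, that CB sets are those bounded in every continuous left-invariant pseudometric. The plan is to show that if $\pi(A)$ is CB then $AK$ is CB in $G$. Given any continuous left-invariant $d$ on $G$, we produce a continuous left-invariant pseudometric on $G/K$, namely $\bar d$. Its pullback $\bar d\circ\pi$ differs from $d$ by at most $\operatorname{diam}_d K$; this is where CB-ness of $K$ is used. So boundedness transfers, and $\pi$ is a coarse equivalence of the left coarse structures.

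\textbf{The main obstacle.} The hardest step is checking that $\bar d$ is continuous on $G/K$, and that every pseudometric arises in this way up to coarse equivalence. Continuity uses the openness of $\pi$ together with the continuity of $d$.

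\textbf{The final claim.} If $G$ is generated by a CB set $S$, then $\pi(S)$ is a CB generating set for $G/K$. Both groups are then quasi-geodesic, via the word metrics. By \cite[Lemma 2.66]{Rosendal}, a coarse equivalence between quasi-geodesic spaces is a quasi-isometry.
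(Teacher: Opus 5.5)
The paper does not prove this statement. It is imported verbatim from Rosendal's book, so there is no internal argument to compare against. Your proof via the quotient pseudometric $\bar d(xK,yK)=\inf_{k\in K}d(x,yk)$ is correct and self-contained. The estimate $\bar d(\pi x,\pi y)\le d(x,y)\le \bar d(\pi x,\pi y)+\sup_{k\in K}d(1,k)$ is exactly what is needed. Your final upgrade through \cite[Lemma 2.66]{Rosendal} is the same reasoning the paper uses in its background section.

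A few things should be tightened.

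\begin{enumerate}
\item The identity $\inf_{k,k'}d(xk,yk')=\inf_k d(x,yk)$ uses normality of $K$, not just left-invariance. So do the well-definedness of $\bar d$ and its triangle inequality. A left-invariant $d$ makes \emph{right} cosets $Kx$ equidistant, and $xK=Kx$ is what transfers this to $G/K$.

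\item What you call the main obstacle is immediate.
  \begin{itemize}
  \item Continuity holds because $\{gK:\bar d(K,gK)<\varepsilon\}=\pi(\{g:d(1,g)<\varepsilon\})$ is open, as $\pi$ is open.
  \item Every continuous left-invariant pseudometric $\rho$ on $G/K$ equals $\overline{\rho\circ\pi}$.
  \end{itemize}
  You do not even need the second fact. Bornologousness only uses pullbacks $\rho\circ\pi$. Expansion only uses that each $\bar d$ is an admissible pseudometric on $G/K$, together with the estimate above.

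\item Your opening choice of a single compatible $d$ plays no role. The argument must run over all continuous left-invariant pseudometrics on $G$, which it later does.

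\item Since $\bar d$ is an infimum, $x^{-1}yk$ lies in the ball of radius $R+\varepsilon$, not $R$. This is harmless.
\end{enumerate}

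The quasi-isometry clause presupposes that $K$ is CB, since otherwise $\pi$ is not even a coarse equivalence; you should state this hypothesis. Also, ``the word metrics'' must be taken with respect to open (or at least analytic, e.g.\ closed) CB generating sets. Word metrics for arbitrary CB generating sets need not represent the canonical coarse structure; this is why the paper specifies closures or analytic sets.

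With this fixed, there is a more direct route. Because $\pi$ is open, a symmetric open CB generating set $V$ of $G$ maps to a symmetric open CB generating set $\pi(V)$ of $G/K$. Since $K$ is CB, $K\subseteq V^m$ for some $m$. Lifting words in $\pi(V)$ to words in $V$ then gives
\[
\rho_{\pi(V)}(\pi x,\pi y)\le\rho_V(x,y)\le\rho_{\pi(V)}(\pi x,\pi y)+m .
\]
So $\pi$ is a quasi-isometry directly, without appealing to Lemma 2.66.
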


A \emph{Cayley--Abels--Rosendal} graph for a Polish group $G$ is a countable, connected graph $\Gamma$ on which $G$ acts continuously with coarsely bounded stabilizers, one orbit of vertices and finitely many orbits of edges. We introduced Cayley--Abels--Rosendal graphs in our previous paper~\cite{BDHL2025}. When $G$ admits a Cayley--Abels--Rosendal graph, $G$ is generated by an open, coarsely bounded set. The group $G$ and the graph $\Gamma$ are quasi-isometric.

\subsection{Ordinals}

An \emph{ordinal} is the order isomorphism class of a well-ordered set. The class of ordinals is \emph{itself} well-ordered. Therefore, if $\alpha$ and $\beta$ are ordinals, then we write $\beta<\alpha$ to say that $\beta$ occurs before $\alpha$ in the associated well-order. 

An ordinal $\alpha$ is a \emph{successor ordinal} if there is an ordinal $\beta$ with $\alpha=\beta+1$. That is, $\beta < \alpha$, and $\alpha$ is the smallest ordinal with this property; its existence is guaranteed by the well-ordering of ordinals.

A nonzero ordinal that is not a successor ordinal is called a \emph{limit ordinal}. We denote the first infinite ordinal by $\omega = \{0,1,2,\ldots\}$; it is also the first limit ordinal. We will not consider $0$ to be either a limit or successor ordinal. 

In this paper we only use countable ordinals. If $\lambda$ is a countable limit ordinal, then we can choose an increasing sequence $\alpha_0<\alpha_1<\dots <\lambda$ for which $\lim_{k\arr \infty} \alpha_k=\lambda$. 

\subsection{The spaces $X_{\alpha,n}$}

\begin{DEF}
    A \emph{Stone space} is a space that is compact, Hausdorff and totally disconnected.
\end{DEF}

Stone spaces $X$ are either perfect or contain an isolated point.
We are interested in Stone spaces $X$ with countably many points. The only perfect, countable Stone space is empty.

\begin{DEF}
    The \emph{Cantor--Bendixson derivative} of a space $X$, denoted $X'$, is the closed subset of $X$ remaining after all isolated points of $X$ are removed.
    More generally, the operation of Cantor--Bendixson derivative can be iterated by transfinite induction:
    \begin{enumerate}
        \item[] {\bf Base case:} $X^{(0)} = X$
        \item[] {\bf Successor case:} $X^{\alpha + 1} = (X^\alpha)'$
        \item[] {\bf Limit case:} If $\displaystyle\lambda = \lim_{k\to\infty} \alpha_k$, then
            $X^{(\lambda)} = \bigcap_{k} X^{(\alpha_k)}$.
    \end{enumerate}
\end{DEF}

If $X$ is a countable Stone space, then for some countable ordinal $\gamma$, the space $X^{(\gamma)}$ is empty.
By the finite intersection property,
the smallest such ordinal is a successor ordinal $\alpha + 1$,
and the space $X^{(\alpha)}$ is nonempty but has finitely many points, say $n$. 
These two pieces of data: the ordinal $\alpha$ and the number $n \ge 1$ characterize $X$ up to homeomorphism~\cite{MS1920}. We denote countable Stone spaces $X_{\alpha, n}$.
We call the points of $X_{\alpha,n}^{(\alpha)}$ the \emph{maximal points} of $X_{\alpha, n}$. The terminology originates from the Mann--Rafi preorder on ends of an infinite type surface defined in \cite[Section 4]{MR2023}.

A concrete model for $X_{\alpha,n}$ can be given by the ordinal $\omega^{\alpha}\cdot n + 1$ equipped with the order topology. However, we will not make use of this order structure and only consider $X_{\alpha,n}$ as a topological space. 

\subsection{Graphical models for $\Homeo(X_{\alpha, n})$} \label{subsec:CARgraphs}
In this subsection we summarize necessary constructions and results from our first paper, \cite{BDHL2025}.

\begin{lem}[\cite{BDHL2025} Corollary 14]
The group $\Homeo(X_{\alpha,1})$ is coarsely bounded in itself,
whence it is coarsely bounded in any topological group into which it embeds continuously.
\end{lem}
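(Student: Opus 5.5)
The plan is to apply Rosendal's criterion \cite[Proposition 2.15]{Rosendal}: a Polish group $G$ is coarsely bounded in itself if and only if, for every identity neighborhood $V$, there are a finite set $F \subset G$ and an integer $k$ with $G = (FV)^k$. Equivalently, $G$ has property (OB), so every continuous isometric action of $G$ on a metric space has bounded orbits. Write $G = \Homeo(X_{\alpha,1})$ and let $p$ be the unique maximal point. A basis of identity neighborhoods is given by pointwise stabilizers $V_{\mathcal{U}}$ of finite clopen partitions $\mathcal{U}$ of $X_{\alpha,1}$. So it suffices to fix such a partition and show that $G = (F V_{\mathcal{U}})^k$ for some finite $F$ and bounded $k$.

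\textbf{Step 1: normalize the partition.} Refine $\mathcal{U}$ so that it consists of one piece $U_0 \ni p$ together with finitely many pieces $U_1,\dots,U_m$. Each $U_i$ with $i \ge 1$ is homeomorphic to some $X_{\beta_i,n_i}$ with $\beta_i < \alpha$, and $U_0$ is homeomorphic to $X_{\alpha,1}$. By the Mazurkiewicz--Sierpi\'nski classification, for any $g \in G$ the set $g(U_0)$ is again a clopen neighborhood of $p$ homeomorphic to $X_{\alpha,1}$. Its complement is a countable Stone space of rank less than $\alpha$.

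\textbf{Step 2: the key swindle.} Given $g$, I would find $h \in V_{\mathcal{U}}$ that moves $g(U_1 \cup \dots \cup U_m)$ deep inside $U_0$, away from the original $U_i$'s. More precisely, pick a clopen $W \subset U_0 \setminus \{p\}$ homeomorphic to $X_{\beta,N}$ with $\beta$ large enough to absorb both $g(\bigcup_{i \ge 1} U_i)\cap U_0$ and a copy of $\bigcup_{i\ge1}U_i$. Such $W$ exists because $p$ has rank $\alpha$, so neighborhoods of $p$ contain clopen pieces of every rank below $\alpha$ with arbitrarily many top points. Then:
\begin{enumerate}
\item Fix once and for all an element $\sigma \in G$ swapping $\bigcup_{i\ge1}U_i$ with a fixed homeomorphic clopen $W_0 \subset U_0$. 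There are only finitely many such "swap" elements, and they form the set $F$.
\item Show that $g$ differs from a product of boundedly many swaps and elements of $V_{\mathcal{U}}$. The point is that $g^{-1}$ composed with suitable elements of $V_{\mathcal U}$ can be made to fix each $U_i$ setwise, after which a further element of $V_{\mathcal U}$ corrects it on each $U_i$.
\end{enumerate}
This is the standard Anderson/Bergman-type argument: homogeneity of $U_0 \setminus \{p\}$ at each rank $\beta<\alpha$ lets one realize any rearrangement of the small clopens by elements supported in $U_0$. Such elements lie in $V_{\mathcal{U}}$ only after conjugating by $\sigma$, which is where the finite set enters. I expect $k \le 4$ or so to suffice, uniformly in $g$.

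\textbf{Step 3: the second clause.} If $\iota\colon G \to H$ is a continuous homomorphism and $d$ is a continuous left-invariant pseudometric on $H$, then $d\circ(\iota\times\iota)$ is a continuous left-invariant pseudometric on $G$. It is therefore bounded on $G$, so $\iota(G)$ is bounded in $H$.

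The main obstacle is Step 2: building, for an arbitrary $g$, the elements of $V_{\mathcal{U}}$ that realize the needed rearrangements. The delicate point is ensuring that the ranks and counts of top points match, so that the required homeomorphisms between clopen pieces exist. I would first handle the case where every $U_i$ is a single point (the case $\alpha=1$, a Bergman-type symmetric group argument), and then induct on the ranks $\beta_i$ using the classification.
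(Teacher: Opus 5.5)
Your framework is the right one: Rosendal's criterion, the basis of partition stabilizers, and a single fixed swap $\sigma$ exchanging $A=U_1\cup\dots\cup U_m$ with a clopen $W_0\subset U_0\setminus\{p\}$. (The basis elements should be the homeomorphisms preserving each piece setwise, not ``pointwise stabilizers''.) Step 3 is also fine. The paper gives no argument here and imports the lemma from \cite{BDHL2025}, so your proof has to stand on its own.

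The gap is Step 2, which carries the entire content. It is not carried out, and the mechanism you describe fails as stated. Every element of $V_{\mathcal U}$ preserves each $U_i$. So composing $g^{-1}$ with elements of $V_{\mathcal U}$ can never make it fix $U_1$ setwise when, say, $g^{-1}(U_1)$ meets both $U_0$ and $U_2$. Your remark that elements supported in $U_0$ lie in $V_{\mathcal U}$ ``only after conjugating by $\sigma$'' is backwards: they already lie in $V_{\mathcal U}$. What conjugation actually buys is this: $\sigma V_{\mathcal U}\sigma^{-1}$ is the stabilizer of $\{X\setminus W_0,\sigma(U_1),\dots,\sigma(U_m)\}$. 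It therefore contains every homeomorphism supported in $X\setminus W_0$, a region containing all the $U_i$. The rank-matching issue you flag as the main obstacle, the $g$-dependent $W$, and the planned induction on the ranks $\beta_i$ are all unnecessary.

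\textbf{The missing idea} is that every clopen neighborhood of $p$ is homeomorphic to $X_{\alpha,1}$. Hence if $Y$ is a clopen neighborhood of $p$ and $B,B'\subset Y\setminus\{p\}$ are clopen, any homeomorphism $B\to B'$ extends to a homeomorphism of $Y$: map $Y\setminus B$ onto $Y\setminus B'$ by any homeomorphism. With this, fix $g$ and note $p\notin g(A)$.

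First take $v_1\in V_{\mathcal U}$ supported in $U_0$ that moves the clopen set $g(A)\cap U_0$ onto a clopen copy of itself inside the neighborhood $U_0\setminus W_0$ of $p$. Such a copy exists by your own observation about neighborhoods of $p$. Then $v_1g(A)\subset X\setminus W_0$.

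Next take $w$ supported in $X\setminus W_0$ that equals $(v_1g)^{-1}$ on $v_1g(A)$, extended by the fact above. Then $w\in\sigma V_{\mathcal U}\sigma^{-1}$, and $wv_1g(U_i)=U_i$ for all $i\ge 1$, hence also for $i=0$. So $wv_1g\in V_{\mathcal U}$.

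Taking $\sigma$ to be an involution, this gives $G=V_{\mathcal U}\,\sigma\, V_{\mathcal U}\,\sigma\, V_{\mathcal U}\subset (FV_{\mathcal U})^3$ with $F=\{\id,\sigma\}$, uniformly in $g$. Without this explicit decomposition, or an equivalent one, your Step 2 states what needs to be shown rather than proving it.
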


\begin{DEF}
A \emph{dividing partition} of $X_{\alpha, n}$ is a clopen partition
\[ \mathcal{P} = P_1 \sqcup \cdots \sqcup P_n \]
into $n$ sets, such that each $P_i$ contains a single maximal point. As such, in a dividing partition $\mathcal{P}$, each partition element $P_i$ is homeomorphic to $X_{\alpha,1}$. In our previous paper, these were called ``good partitions" and were defined only for successor ordinals.
\end{DEF}

The stabilizers of clopen partitions of $\Homeo(X_{\alpha, n})$ form a neighborhood basis of the identity~\cite[Section 1.2]{BranmanLyman}.

\begin{lem}\label{lem:stabCB}
The stabilizer in $\Homeo(X_{\alpha, n})$ of a dividing partition has an open, finite-index subgroup isomorphic to $\prod_{i=1}^n \Homeo(X_{\alpha,1})$. As such, it is an open, coarsely bounded subgroup.
\end{lem}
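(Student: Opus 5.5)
Fix a dividing partition $\mathcal{P} = P_1 \sqcup \cdots \sqcup P_n$ and write $S$ for its stabilizer in $\Homeo(X_{\alpha,n})$; let $x_i$ be the maximal point in $P_i$. The plan has three steps: show $S$ is open, exhibit the finite-index product subgroup, and then deduce coarse boundedness.

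\textbf{Openness.} The stabilizers of clopen partitions form a neighborhood basis of the identity~\cite[Section 1.2]{BranmanLyman}. Hence $S$ contains an open neighborhood of the identity. Since $S$ is a subgroup, it is therefore open.

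\textbf{The product subgroup.} Let $S_0 \le S$ be the subgroup of homeomorphisms $f$ with $f(P_i) = P_i$ for every $i$.
\begin{enumerate}
    \item Any $f \in S$ permutes the blocks $P_1,\dots,P_n$. This gives a homomorphism $S \to \mathrm{Sym}(n)$ whose kernel is $S_0$, so $S_0$ has index at most $n!$ in $S$.
    \item $S_0$ is the stabilizer of the clopen partition $\mathcal{P}$ with its blocks labeled, so it contains a basic neighborhood of the identity and is open. Alternatively, a finite-index closed subgroup of an open subgroup is open.
    \item Restriction gives a map $S_0 \to \prod_{i=1}^n \Homeo(P_i)$. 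It is bijective, because homeomorphisms of the clopen pieces glue to a homeomorphism of $X_{\alpha,n}$.
    \item This bijection is a topological isomorphism for the compact-open topologies, because each $P_i$ is clopen.
    \item Each $P_i$ is a countable Stone space whose unique point of Cantor--Bendixson rank $\alpha$ is $x_i$, so $P_i \cong X_{\alpha,1}$ by the classification~\cite{MS1920}. Thus $S_0 \cong \prod_{i=1}^n \Homeo(X_{\alpha,1})$.
\end{enumerate}

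\textbf{Coarse boundedness.} By \cite[Corollary 14]{BDHL2025}, each $\Homeo(X_{\alpha,1})$ is coarsely bounded in itself.
\begin{enumerate}
    \item A finite product of coarsely bounded groups is coarsely bounded. Indeed, a continuous left-invariant pseudometric $d$ on the product is bounded on each factor, and every element is a product of $n$ factor elements, so the triangle inequality together with left-invariance bounds its $d$-norm by a sum of $n$ bounded terms.
    \item Hence $S_0$ is coarsely bounded in itself.
    \item Since the inclusion $S_0 \hookrightarrow \Homeo(X_{\alpha,n})$ is continuous, $S_0$ is coarsely bounded in $\Homeo(X_{\alpha,n})$.
    \item Finally, $S = \bigcup_{j=1}^m g_j S_0$ for finitely many coset representatives $g_j$. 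A finite union of left translates of a CB set is CB: for any continuous left-invariant pseudometric, left translation is an isometry, and finitely many bounded sets have bounded union.
\end{enumerate}
So $S$ is an open, coarsely bounded subgroup.

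\textbf{Main obstacle.} The step most needing care is that $S_0 \cong \prod_{i=1}^n \Homeo(P_i)$ as topological groups, in particular that the gluing map is continuous for the compact-open topology. I would argue that on a compact space the compact-open topology is the topology of uniform convergence. With the $P_i$ clopen and at positive distance from one another, uniform convergence on $X_{\alpha,n}$ is equivalent to uniform convergence on each $P_i$. Inversion causes no difficulty, since it is computed block by block.
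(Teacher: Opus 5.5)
Your proof is correct. It differs from the paper's only in that the paper does not argue the statement at all: it observes that this stabilizer is exactly the subgroup used in \cite[Corollary 15]{BDHL2025} to show that $\Homeo(X_{\alpha,n})$ is locally CB. Openness, the finite-index product structure and coarse boundedness are therefore all imported wholesale from the earlier paper.

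You instead rebuild the result from \cite[Corollary 14]{BDHL2025} alone. You combine it with openness of stabilizers of clopen partitions and with standard permanence properties of coarse boundedness: finite products of CB groups are CB, continuous homomorphic images of CB sets are CB, and finite unions of left translates of a CB set are CB. The citation buys brevity. Your version shows exactly which inputs are needed, and it checks the one point the paper leaves implicit: the gluing map $\prod_i \Homeo(P_i) \to \Homeo(X_{\alpha,n})$ is continuous. That is the direction actually used to push coarse boundedness of the product into $\Homeo(X_{\alpha,n})$.

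Two minor simplifications. First, $P_i \cong X_{\alpha,1}$ is already built into the paper's definition of a dividing partition. Your rank argument is nonetheless fine, since $P_i$ open gives $P_i^{(\gamma)} = P_i \cap X_{\alpha,n}^{(\gamma)}$. Second, openness of $S_0$ is immediate, since $S_0 = \bigcap_i \{ f : f(P_i) \subseteq P_i \}$ is a finite intersection of subbasic compact-open sets.
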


\begin{proof}
    This stabilizer is precisely the subgroup used in \cite[Corollary 15]{BDHL2025} to show that the group $\Homeo(X_{\alpha, n})$ is locally CB.
\end{proof}

\begin{DEF}\label{shifts}
Fix $X = X_{\alpha, n}$.
If $\beta < \alpha$ is a countable ordinal and $n > 1$, we say that two dividing partitions $\mathcal{P} = P_1 \sqcup \cdots \sqcup P_n$ and $\mathcal{Q} = Q_1 \sqcup \cdots \sqcup Q_n$ of $X$ \emph{differ by a $\beta$-shift} if there is a clopen subspace $S$ homeomorphic to $X_{\beta,1}$ such that, after possibly swapping the roles of $\cP$ and $\cQ$ and reindexing, we have \[
P_i = Q_i \text{ for } 1 < i < n, \quad P_1 = Q_1 \sqcup S \quad\text{ and }\quad P_n \sqcup S = Q_n.
\]
\end{DEF}

In other words, the subspace $S$ shifts ``out of'' $P_1$ and ``into'' $P_n$ to create the new partition $\mathcal{Q}$. 
When $\alpha=\beta +1$ is a successor ordinal, we call a $\beta$-shift a \emph{maximal shift}. We use the language of shifting because in \cite{BDHL2025} we show that there is always an infinite order homeomorphism of $X_{\alpha, n}$ that takes $\mathcal{P}$ to $\mathcal{Q}$. 

\begin{DEF}\label{def:TheGraph}
    Fix a successor ordinal $\alpha = \beta + 1$ and integer $n>1$. Define the graph $\Gamma(\alpha,n)$ to be the simplicial graph whose vertex set is the collection of dividing partitions of $X_{\alpha,n}$, where two partitions are connected by an edge when they differ by a $\beta$-shift.
\end{DEF}
In \cite[Section 4.3]{BDHL2025} we showed that the graph $\Gamma(\alpha,n)$ whose vertices are dividing partitions and where edges correspond to maximal shifts, is a Cayley--Abels--Rosendal graph for $\Homeo(X_{\alpha,n})$. 

\section{The Cantor--Bendixson derivative}\label{sec:derivative}
The following lemma is implicit in the proof of \cite[Lemma 17]{BDHL2025}.

\begin{lem}\label{lem:boundedorbits}
    Suppose $\mathcal{P} = P_1 \sqcup \cdots \sqcup P_n$ and $\mathcal{Q} = Q_1 \sqcup \cdots \sqcup Q_n$ are dividing partitions of $X_{\alpha, n}$. Fix $\beta < \alpha$.
    If for all $i$, each rank-$\beta$ point of $P_i$ belongs to $Q_i$ and vice versa,
    then $\mathcal{P}$ and $\mathcal{Q}$ differ by at most $n(n-1)$ $\beta$-shifts.
\end{lem}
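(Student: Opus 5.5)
The idea is to transfer the ``discrepancy'' sets between $\mathcal{P}$ and $\mathcal{Q}$ from one piece to another. Each transfer is done by a pair of $\beta$-shifts that carries along an auxiliary copy of $X_{\beta,1}$ and then brings it back.

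\textbf{Step 1: the discrepancy sets have rank $<\beta$.} First I upgrade the hypothesis to points of rank $\ge \beta$. Any point $x$ of rank $>\beta$ is a limit of rank-$\beta$ points. If $x\in P_i$, then since $P_i$ is open, $P_i$ contains rank-$\beta$ points arbitrarily close to $x$. These points lie in $Q_i$ by hypothesis, and $Q_i$ is closed, so $x\in Q_i$. For $i\neq j$ set $D_{ij}=P_i\cap Q_j$. These sets are clopen and pairwise disjoint, and by the above $D_{ij}^{(\beta)}=\varnothing$. Moreover $P_i = (P_i\cap Q_i)\sqcup\bigsqcup_{j\ne i}D_{ij}$, and $Q_j$ decomposes similarly.

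\textbf{Step 2: absorbing a small set into an $X_{\beta,1}$.} The piece $P_i\cap Q_i$ contains the maximal point of $P_i$, which has rank $\alpha>\beta$. Hence it contains rank-$\beta$ points, and any such point has a clopen neighbourhood $S\subset P_i\cap Q_i$ homeomorphic to $X_{\beta,1}$. For any clopen $D$ disjoint from $S$ with $D^{(\beta)}=\varnothing$, the set $S\sqcup D$ is a countable Stone space with exactly one point of rank $\beta$ and none of higher rank. By the classification of \cite{MS1920} it is therefore homeomorphic to $X_{\beta,1}$. I expect this to be the only step that needs care: the ranks must be checked so that the classification really applies, with the union neither picking up an extra rank-$\beta$ point nor having rank below $\beta$.

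\textbf{Step 3: swapping across one unordered pair with two shifts.} Fix $i<j$, and take $S\subset P_i\cap Q_i$ as in Step 2. First perform the $\beta$-shift moving $S\sqcup D_{ij}$ from the $i$-th piece to the $j$-th piece. Then perform the $\beta$-shift moving $S\sqcup D_{ji}$ from the $j$-th piece back to the $i$-th piece; this set is again a copy of $X_{\beta,1}$ by Step 2. The net effect is that $D_{ij}$ now lies in the $j$-th piece, $D_{ji}$ lies in the $i$-th piece, and $S$ has returned to where it started. No maximal point moves, so every intermediate partition is dividing, and each move is a $\beta$-shift in the sense of \Cref{shifts} after reindexing. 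If both $D_{ij}$ and $D_{ji}$ are empty, skip the pair; if only one is nonempty, a single shift $D\sqcup S$ followed by the return of $S$ still costs at most two shifts.

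\textbf{Step 4: iterate over all pairs.} Process the $\binom{n}{2}$ unordered pairs one at a time. The sets $D_{kl}$ are disjoint, and a step for the pair $\{i,j\}$ changes only the $i$-th and $j$-th pieces, and only by moving $D_{ij}$ and $D_{ji}$. Consequently, at every stage the $k$-th piece still contains $P_k\cap Q_k$ (so Step 2 stays available), together with the $D$-sets not yet processed. After all pairs are handled, the $k$-th piece equals $(P_k\cap Q_k)\sqcup\bigsqcup_{l\neq k} D_{lk}=Q_k$. The total number of $\beta$-shifts is at most $2\binom{n}{2}=n(n-1)$, as claimed.
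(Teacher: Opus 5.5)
Your proposal is correct and matches the paper's proof. For each unordered pair $\{i,j\}$ the paper also picks $Y\cong X_{\beta,1}$ inside $P_i\cap Q_i$, shifts $Y\sqcup(P_i\cap Q_j)$ into the $j$-th piece and then $Y\sqcup(P_j\cap Q_i)$ back, giving two shifts per pair and $n(n-1)$ in total. Your Step 1 also makes explicit something the paper only asserts: the sets $P_i\cap Q_j$ contain no points of rank $\ge\beta$, not merely none of rank exactly $\beta$.
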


\begin{proof}
    We will construct a path of $\beta$-shifts from $\mathcal{P}$ to $\mathcal{Q}$ of length at most $n(n-1)$.

    Choose a pair of distinct indices $(i,j)$ in $\{1,\ldots,n\}$.
    We will construct a new partition 
    \[ \mathcal{R} = R_1 \sqcup \cdots \sqcup R_n \]
    such that \begin{enumerate}
        \item $R_k = P_k$ if $k \neq i,j$, 
        \item $R_i \cap (P_i \sqcup P_j) \cap (Q_i \sqcup Q_j) = Q_i \cap (P_i \sqcup P_j)$,
        \item and therefore $R_j \cap (P_i \sqcup P_j) \cap (Q_i \sqcup Q_j) = Q_j \cap (P_i \sqcup P_j)$. 
    \end{enumerate}

    In other words, points of $P_j$ which move to $Q_i$
    and points of $P_i$ which move to $Q_j$
    will belong to $R_i$ and $R_j$, respectively,
    while points common to $P_i$ and $Q_i$ remain in $R_i$ and similarly for $R_j$.
    
    The partition $\mathcal{R}$ will by construction differ from $\mathcal{P}$ by at most two $\beta$-shifts.
    Repeating this construction for the pair $\mathcal{R},\mathcal{Q}$ and a different unordered pair of indices will produce $\mathcal{Q}$ after at most $\frac{n(n-1)}{2}$ repetitions, whence the claim.

    Here is the construction: choose a rank-$\beta$ point $x$ arbitrarily in $P_i \cap Q_i$ and a clopen neighborhood $Y$ of $x$ in $P_i \cap Q_i$ homeomorphic to $X_{\beta,1}$. By the classification of countable Stone spaces,
    because both $P_i \cap Q_j$ and $P_j \cap Q_i$ contain only points of rank strictly below $\beta$, both $Y_1 = Y \sqcup (Q_j \cap P_i)$ and $Y_2 = Y \sqcup (Q_i \cap P_j)$ are homeomorphic to $X_{\beta,1}$.
    The partition $\mathcal{S} = S_1 \sqcup \cdots \sqcup S_n$
    with \[
    S_k = P_k \text{ if } k \ne i,j, \quad S_i = P_i \setminus Y_1, \quad \text{ and } \quad S_j = P_j \sqcup Y_1
    \]
    therefore differs from $\mathcal{P}$ by a $\beta$-shift.
    Similarly the partition $\mathcal{R}$ with
    \[
    R_k = P_k \text{ if } k \ne i,j, \quad
    R_i = S_i \sqcup Y_2, \quad \text{ and } R_j = S_j \setminus Y_2
    \]
    differs from $\cS$ by a $\beta$-shift and has the desired properties above.
    This completes the claim and with it the proof.
\end{proof}

The lemma has the following corollary.

\begin{cor}\label{boundedorbitsobservation}
    Any subset $A \subset \Homeo(X_{\alpha,n})$ that acts trivially on the set of rank-$\beta$ points of $X_{\alpha,n}$ for $\beta < \alpha$ is coarsely bounded.
\end{cor}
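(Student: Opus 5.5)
The plan is to control, for every continuous left-invariant pseudometric, the diameter of $A$ through the orbit of a fixed dividing partition $\mathcal{P}$ in a graph of $\beta$-shifts, using \Cref{lem:boundedorbits} for the bound. We may assume $n>1$, since otherwise $\Homeo(X_{\alpha,1})$ is itself coarsely bounded and there is nothing to prove.

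\textbf{Step 1: orbit points stay close.} Fix a dividing partition $\mathcal{P}=P_1\sqcup\cdots\sqcup P_n$. For $f\in A$, the image $f.\mathcal{P}=f(P_1)\sqcup\cdots\sqcup f(P_n)$ is again a dividing partition, because $f$ permutes the finitely many maximal points; in fact $f$ fixes them. We first check that $f$ acts trivially on the rank-$\gamma$ points for every $\gamma\ge\beta$: such points are accumulation points of rank-$\beta$ points, and these are fixed. Now take a rank-$\beta$ point $x\in P_i$. Then $f(x)=x$ and $f(x)\in f(P_i)$, so every rank-$\beta$ point of $P_i$ lies in $f(P_i)$. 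The same argument run backwards gives the converse inclusion. By \Cref{lem:boundedorbits}, $\mathcal{P}$ and $f.\mathcal{P}$ are therefore joined by a chain of at most $n(n-1)$ $\beta$-shifts.

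\textbf{Step 2: from orbit points to group elements.} Let $V\subset\Homeo(X_{\alpha,n})$ be the stabilizer of $\mathcal{P}$. It is open and coarsely bounded by \Cref{lem:stabCB}. Next consider the set $S$ of all $g$ such that $g.\mathcal{P}$ differs from $\mathcal{P}$ by a single $\beta$-shift. We aim to show that $S\subseteq V F V$ for a finite set $F$. Any two such one-shift configurations that have the same pair of indices are related by an element of $V$. This uses the classification of countable Stone spaces: each of $P_1\setminus S$, $S$ and $P_n\sqcup S$ has a unique homeomorphism type. There are only finitely many index choices, which gives the finite set $F$.

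\textbf{Step 3: conclusion.} Write the chain from Step 1 as $\mathcal{P}=\mathcal{R}_0,\mathcal{R}_1,\dots,\mathcal{R}_m=f.\mathcal{P}$ with $m\le n(n-1)$. Using the transitivity from Step 2 inductively, choose $g_1,\dots,g_m$ with $\mathcal{R}_k=g_1\cdots g_k.\mathcal{P}$ and each $g_k\in VFV$. Since $g_1\cdots g_m.\mathcal{P}=f.\mathcal{P}$, we get
\[
f\in (VFV)^{n(n-1)}V .
\]
A finite product of coarsely bounded sets is coarsely bounded, so $A$ is coarsely bounded.

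\textbf{Main obstacle.} The delicate point is Step 2 when $\beta$ is not maximal, i.e.\ $\beta+1<\alpha$. There $\Gamma(\alpha,n)$ is not the relevant graph, and we must check by hand that single $\beta$-shifts of $\mathcal{P}$ form finitely many $V$-orbits. The approach is to verify this directly from the homeomorphism-type uniqueness of the pieces. If that verification fails, the fallback is the characterization in \cite[Proposition 2.15]{Rosendal}: for any continuous left-invariant pseudometric $d$, bound $d(1,g)$ uniformly over single-shift elements $g$. This works because any two single shifts with the same indices are conjugate by elements of $V$, which has bounded diameter.
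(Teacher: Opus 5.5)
Your proposal is correct and follows essentially the paper's route: the paper also applies \Cref{lem:boundedorbits} to $\mathcal{P}$ and $a.\mathcal{P}$, and concludes that $A\subset(\{f^{\pm1}\}H)^{n(n-1)}$, where $f$ is a single $\beta$-shift of $\mathcal{P}$ and $H$ is its stabilizer. Your Step 2 spells out the transitivity of the stabilizer on one-shift neighbors, which the paper leaves implicit; the paper gets by with a single $f$ because $H$ also permutes the pieces, whereas you use a finite set $F$ indexed by pairs of indices.
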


\begin{proof}
    By \Cref{lem:boundedorbits}, if $\mathcal{P}$ is a dividing partition and $f \in \Homeo(X_{\alpha,n})$ is a $\beta$-shift on $\mathcal{P}$, the set $A$ is contained in the coarsely bounded set $(\{f^{\pm1}\}H)^{n(n-1)}$.
\end{proof}

Now we have two more immediate corollaries that will be utilized in the successor ordinal case. 

\begin{cor}\label{cor:kerCB}
    The kernel of the action of $\Homeo(X_{\beta+1,n})$ on the closed subset $X_{\beta+1,n}^{(\beta)} \cong X_{1,n}$ is coarsely bounded in $\Homeo(X_{\beta+1,n})$.
\end{cor}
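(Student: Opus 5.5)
This is a direct application of \Cref{boundedorbitsobservation} with $\alpha = \beta+1$. Let $K$ denote the kernel of the action of $\Homeo(X_{\beta+1,n})$ on the closed, invariant subset $X_{\beta+1,n}^{(\beta)}$. The plan has three steps.

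First, identify the rank-$\beta$ points. Since $X_{\beta+1,n}^{(\beta+1)}$ consists of the $n$ maximal points, the rank-$\beta$ points of $X_{\beta+1,n}$ are exactly the points of $X_{\beta+1,n}^{(\beta)} \setminus X_{\beta+1,n}^{(\beta+1)}$. These are the isolated points of the derived set $X_{\beta+1,n}^{(\beta)}$, which is a countable Stone space of rank $1$ with $n$ maximal points, hence homeomorphic to $X_{1,n}$.

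Second, check that $K$ acts trivially on the rank-$\beta$ points. An element $f \in K$ fixes $X_{\beta+1,n}^{(\beta)}$ pointwise, so in particular it fixes every rank-$\beta$ point. Since $\beta < \beta+1 = \alpha$, \Cref{boundedorbitsobservation} applies to the subset $A = K$, and $K$ is coarsely bounded in $\Homeo(X_{\beta+1,n})$.

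Third, dispose of the degenerate case $n = 1$. \Cref{lem:boundedorbits}, and hence \Cref{boundedorbitsobservation}, are phrased using $\beta$-shifts, which are only defined for $n > 1$. When $n = 1$ we instead use that $\Homeo(X_{\beta+1,1})$ is coarsely bounded in itself by \cite[Corollary 14]{BDHL2025}, so every subset, including $K$, is coarsely bounded.

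There is no real obstacle here. The only point to handle carefully is the interpretation of ``acts trivially on the set of rank-$\beta$ points'' in \Cref{boundedorbitsobservation}. The hypothesis of \Cref{lem:boundedorbits} only requires that, for each $i$, every rank-$\beta$ point of $P_i$ lie in $f(P_i)$. This holds even under the weaker assumption that each $f \in A$ preserves each $P_i$'s share of rank-$\beta$ points, and it certainly holds when $f$ fixes them pointwise. So pointwise triviality on $X_{\beta+1,n}^{(\beta)}$ is more than enough.
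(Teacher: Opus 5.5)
Your proposal is correct and follows the paper's proof. The paper likewise observes that an element lies in the kernel exactly when it fixes every rank-$\beta$ point, and then applies \Cref{boundedorbitsobservation} with $\alpha=\beta+1$. Your separate handling of $n=1$ via the coarse boundedness of $\Homeo(X_{\beta+1,1})$ is a valid addition that the paper leaves implicit, since $\beta$-shifts are only defined for $n>1$.
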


\begin{proof}
    An element $f$ is in the kernel of the action of $\Homeo(X_{\beta+1,n})$ on $X_{\beta+1,n}^{(\beta)}$ if and only if it fixes every point of rank $\beta$.
\end{proof}

\begin{cor}\label{cor:CantorBendixson-QI}
    For any countable ordinal $\beta$, the group $\Homeo(X_{\beta+1,n})$ is quasi-isometric to $\Homeo(X_{1,n})$.
\end{cor}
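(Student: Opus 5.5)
The plan is to realize the quasi-isometry as a continuous surjective homomorphism with coarsely bounded kernel, and then apply \Cref{prop:sesce}. Set $X = X_{\beta+1,n}$ and write $Y = X^{(\beta)}$ for the closed set of points of rank at least $\beta$. Every homeomorphism of $X$ preserves Cantor--Bendixson rank, so it preserves $Y$, and restriction gives a homomorphism
\[
\rho\colon \Homeo(X) \to \Homeo(Y).
\]
Since $Y$ is a countable Stone space of rank $1$ with $n$ maximal points, $Y \cong X_{1,n}$. The proof then has three steps.

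\textbf{Step 1 (continuity).} Stabilizers of clopen partitions of $Y$ form a neighborhood basis of the identity in $\Homeo(Y)$. Given a clopen partition of $Y$, extend it to a clopen partition of $X$: each piece is $Y \cap U_i$ for some clopen $U_i \subset X$, and we may disjointify the $U_i$ and absorb the leftover into one piece. The stabilizer of the extended partition in $\Homeo(X)$ maps into the given stabilizer, so $\rho$ is continuous.

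\textbf{Step 2 (surjectivity).} This is the step needing the most care. Given $g \in \Homeo(Y)$, first choose a clopen partition of $Y$ into finitely many pieces, each containing at most one point of rank $\beta+1$ (that is, a maximal point). Some such pieces are cofinite neighborhoods of maximal points, and the rest are finite sets of isolated points of $Y$. Replace $g$ by this combinatorial data: $g$ maps the pieces to clopen sets, and after refining we can arrange that $g$ carries each piece onto another piece of a second clopen partition of $Y$.

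Next, extend each partition to a clopen partition $\{U_i\}$ of $X$ whose pieces are rank-$\beta$ neighborhoods: each $U_i \cong X_{\beta,m}$ when $Y \cap U_i$ is finite of size $m$, and $U_i \cong X_{\beta+1,1}$ when $U_i$ contains a maximal point. The required homeomorphism of $X$ is then assembled piece by piece. Matching $g$ on rank-$\beta$ points inside $X_{\beta,m}$-pieces is fine, because $X_{\beta,m}$ splits as $m$ copies of $X_{\beta,1}$ and these can be permuted.

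The hard case is a piece $U \cong X_{\beta+1,1}$, on which $g$ restricts to a bijection of its rank-$\beta$ points fixing the maximal point. Here I would decompose $U \setminus \{\text{max}\}$ as a disjoint union of countably many clopen $X_{\beta,1}$'s, one around each rank-$\beta$ point, converging to the maximal point. The permutation induced by $g$ then extends to a homeomorphism, with continuity at the maximal point following because $g$ is continuous on $Y$ there: neighborhoods of the maximal point correspond to cofinite sets of blocks.

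\textbf{Step 3 (kernel and conclusion).} By \Cref{cor:kerCB}, $\ker\rho$ is coarsely bounded in $\Homeo(X)$. It is also a closed normal subgroup. Step 2 gives $\Homeo(X)/\ker\rho \cong \Homeo(Y)$ algebraically. To upgrade this to an isomorphism of topological groups, I would invoke the open mapping theorem for Polish groups, since $\rho$ is a continuous surjection between Polish groups.

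\Cref{prop:sesce} then gives that $\Homeo(X) \to \Homeo(X)/\ker\rho$ is a coarse equivalence. Both groups are CB generated: they admit Cayley--Abels--Rosendal graphs when $n>1$, and for $n=1$ both are coarsely bounded. So this coarse equivalence is a quasi-isometry, and composing with $Y \cong X_{1,n}$ yields the claim.
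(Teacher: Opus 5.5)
Your proposal is correct and follows the same route as the paper: restrict to $X_{\beta+1,n}^{(\beta)} \cong X_{1,n}$, observe that the kernel is coarsely bounded by \Cref{cor:kerCB}, and apply \Cref{prop:sesce}. The paper only asserts that the restriction map is continuous and surjective. Your extra material (Steps 1--2, the open mapping theorem identifying the quotient with $\Homeo(X_{1,n})$ as a topological group, and the CB-generation check) fills in what the paper leaves implicit. One small correction in Step 2: $g$ may carry a piece containing one maximal point onto a piece containing a different maximal point, rather than fixing it, but the same block-by-block construction handles this case.
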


\begin{proof}
    The action of $\Homeo(X_{\beta + 1,n})$
    on the closed subset $X_{\beta+1,n}^{(\beta)} \cong X_{1,n}$
    exhibits a continuous, surjective homomorphism
    $\pi\colon \Homeo(X_{\beta+1,n}) \to \Homeo(X_{1,n})$
    with coarsely bounded kernel. Thus this map is a quasi-isometry by \Cref{prop:sesce}.
\end{proof}

\section{The successor ordinal case}\label{sec:successcase}

The purpose of this section is to prove the second case of \Cref{thm:mainclassification}, the case where $\alpha = \beta + 1$ is a successor ordinal. 

\subsection{Cayley--Abels--Rosendal graphs as Hamming graphs}\label{sec:CARtoHam}

In this subsection, we prove that the Cayley--Abels--Rosendal graphs given in \Cref{subsec:CARgraphs} are isomorphic to Hamming graphs. 

Let $\mathbf{x}$ and $\mathbf{y}$ be sequences over some alphabet $A$. The \emph{Hamming distance} between $\mathbf{x}$ and $\mathbf{y}$
records the number of entries where the sequences differ.
When our alphabet is $\mathbb{Z}/n\mathbb{Z}$,
recall that the direct sum $\bigoplus_\omega \mathbb{Z}/n\mathbb{Z}$
may be identified with the subspace of the direct product $\prod_\omega \mathbb{Z}/n\mathbb{Z}$
comprising those sequences $(x_k)_{k\in\omega}$
for which the set $\{ k : x_k \ne 0 \}$ is finite.
Any two sequences $\mathbf{x}$ and $\mathbf{y}$ in $\bigoplus_\omega\mathbb{Z}/n\mathbb{Z}$ are thus at finite Hamming distance.

Given $h \ne 0$ in $\mathbb{Z}/n\mathbb{Z}$ and $k \in \omega$,
let $h\cdot e_k$ denote the sequence whose $k$th coordinate is $h$
and all others are $0$.

\begin{DEF}
The \emph{infinite Hamming graph} $H_n$ is the Cayley graph
of $\bigoplus_\omega \mathbb{Z}/n\mathbb{Z}$
with respect to the infinite generating set
$S = \{ h\cdot e_k : k \in \omega,~ h \in \mathbb{Z}/n\mathbb{Z} \setminus\{0\} \}$.
\end{DEF}

The induced graph metric on the vertices of $H_n$ agrees with the Hamming distance on $\bigoplus_\omega\mathbb{Z}/n\mathbb{Z}$. 

\begin{prop}\label{prop:CARisHamming}
    The graphs $\Gamma_n = \Gamma(1,n)$ and $H_n$ are isomorphic.
\end{prop}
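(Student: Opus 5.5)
We generalize the guiding example. The space $X_{1,n}$ has $n$ maximal points $x_1,\dots,x_n$, and every other point is isolated; call the set of isolated points $I$. A dividing partition $\mathcal{P}=P_1\sqcup\cdots\sqcup P_n$ is determined by the function $f_{\mathcal{P}}\colon I\to\{1,\dots,n\}$ sending $y$ to the index $i$ with $y\in P_i$. A function $f\colon I\to\{1,\dots,n\}$ arises this way exactly when each set $f^{-1}(i)\cup\{x_i\}$ is clopen. Since $X_{1,n}$ has rank $1$, this says that each $x_i$ has a neighborhood meeting $I$ only in points labeled $i$.

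First fix a basepoint dividing partition $\mathcal{P}_0$. We claim that for any dividing partition $\mathcal{Q}$, the set $D(\mathcal{Q})=\{y\in I : f_{\mathcal{Q}}(y)\neq f_{\mathcal{P}_0}(y)\}$ is finite. Compactness gives this: $Q_i\cap P_{0,i}$ is a clopen neighborhood of $x_i$ for each $i$. The union $U$ of these $n$ neighborhoods is clopen and contains all maximal points, so $X_{1,n}\setminus U$ is a compact subset of $I$, hence finite, and $D(\mathcal{Q})\subseteq X_{1,n}\setminus U$.

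Conversely, any finite modification of $f_{\mathcal{P}_0}$ still defines a dividing partition. Changing the label of finitely many isolated points preserves clopenness of each part and keeps $x_i$ in the $i$th part.

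Next we produce the group labels. Enumerate $I=\{y_k\}_{k\in\omega}$; it is countably infinite since $\alpha=1>0$. Define
\[
\Phi(\mathcal{Q})=\bigl(f_{\mathcal{Q}}(y_k)-f_{\mathcal{P}_0}(y_k) \bmod n\bigr)_{k\in\omega}\in \textstyle\bigoplus_\omega \mathbb{Z}/n\mathbb{Z}.
\]
This lies in the direct sum by finiteness of $D(\mathcal{Q})$. It is injective, since $f_{\mathcal{Q}}$ is recovered from $\Phi(\mathcal{Q})$ because labels are taken from $\{1,\dots,n\}$, a full set of residues. It is surjective by the finite-modification remark above.

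Finally we check that edges match. Here $\beta=0$, so $X_{\beta,1}=X_{0,1}$ is a single point, and a $0$-shift moves one isolated point $S=\{y\}$ from one part to another. In \Cref{shifts} the indices are $1$ and $n$ after reindexing, so any ordered pair of distinct indices is allowed. Thus $\mathcal{P}$ and $\mathcal{Q}$ are adjacent in $\Gamma_n$ if and only if $f_{\mathcal{P}}$ and $f_{\mathcal{Q}}$ differ at exactly one point of $I$. This happens if and only if $\Phi(\mathcal{P})$ and $\Phi(\mathcal{Q})$ differ in exactly one coordinate, that is, $\Phi(\mathcal{Q})-\Phi(\mathcal{P})=h\cdot e_k$ for some $h\neq0$. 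That is precisely adjacency in $H_n$, so $\Phi$ is a graph isomorphism.

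The main point needing care is the reading of \Cref{shifts}: the phrase ``after reindexing'' must permit moving the point between any two parts, not only from $P_1$ to $P_n$. Once that is granted, the rest is the compactness argument for finiteness of $D(\mathcal{Q})$.
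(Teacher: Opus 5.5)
Your proof is correct and follows essentially the same route as the paper: you identify dividing partitions with labelings that are locally constant and fixed on the maximal points, record the coordinatewise difference from a basepoint partition in $\bigoplus_\omega \mathbb{Z}/n\mathbb{Z}$, and observe that $0$-shifts are exactly single-coordinate changes. The differences are cosmetic: you prove finiteness of the difference set using the clopen union $U$ and compactness instead of the paper's pigeonhole/continuity argument, and you make explicit the reading of ``reindexing'' in \Cref{shifts}, which the paper uses implicitly.
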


First we need a short lemma. 

\begin{lem}\label{lem:partitionsarefunctions}
    Fix a labeling of the maximal points of $X_{\alpha,n}$ by elements of $\mathbb{Z}/n\mathbb{Z}$.
    Given a dividing partition $\mathcal{Q}$ of $X_{\alpha,n}$, label the partition elements with elements of $\mathbb{Z}/n\mathbb{Z}$ so that the $i$th maximal point belongs to $Q_i$ with indices mod $n$.
    This done, there is a bijective correspondence between dividing partitions and continuous functions $X_{\alpha,n} \to \mathbb{Z}/n\mathbb{Z}$ such that the $i$th maximal point is mapped to $i \in \mathbb{Z}/n\mathbb{Z}$.
\end{lem}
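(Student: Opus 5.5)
The plan is to write down the two maps explicitly and check that they are mutually inverse. Given a dividing partition $\mathcal{Q} = Q_1 \sqcup \cdots \sqcup Q_n$, indexed by $\mathbb{Z}/n\mathbb{Z}$ as in the statement, define $f_{\mathcal{Q}}\colon X_{\alpha,n} \to \mathbb{Z}/n\mathbb{Z}$ by $f_{\mathcal{Q}}(x) = i$ when $x \in Q_i$. Conversely, given a continuous $f\colon X_{\alpha,n} \to \mathbb{Z}/n\mathbb{Z}$ sending the $i$th maximal point to $i$, define $\mathcal{Q}_f$ by $Q_i = f^{-1}(i)$.

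First I check that $f_{\mathcal{Q}}$ is well defined, continuous, and normalized. It is well defined because the $Q_i$ partition $X_{\alpha,n}$. It is continuous because $\mathbb{Z}/n\mathbb{Z}$ is discrete and each preimage $f_{\mathcal{Q}}^{-1}(i) = Q_i$ is clopen. It sends the $i$th maximal point to $i$ by the choice of labeling.

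Next I check that $\mathcal{Q}_f$ is a dividing partition. Since $\mathbb{Z}/n\mathbb{Z}$ is discrete and $f$ is continuous, each $f^{-1}(i)$ is clopen, and these sets partition $X_{\alpha,n}$. Each $f^{-1}(i)$ contains the $i$th maximal point, so it is nonempty and holds at least one maximal point. There are exactly $n$ maximal points and $n$ sets, so each set holds exactly one. Hence $\mathcal{Q}_f$ is a dividing partition, and its labeling agrees with the convention in the statement.

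Finally, the two constructions are inverse to each other by definition: $f_{\mathcal{Q}_f} = f$ and $\mathcal{Q}_{f_{\mathcal{Q}}} = \mathcal{Q}$. This gives the bijection.

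The only real subtlety is the labeling. A dividing partition is an unordered collection of sets, and it is the requirement that the $i$th maximal point lie in $Q_i$ that pins down a canonical indexing. This labeled form is what lets us later compare two partitions pointwise, which is presumably how the identification with $H_n$ in \Cref{prop:CARisHamming} will go.
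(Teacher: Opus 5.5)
Your proposal is correct and takes essentially the same route as the paper. Both arguments rest on the same facts: continuity into the discrete group $\mathbb{Z}/n\mathbb{Z}$ is equivalent to each preimage $Q_i$ being clopen, the normalization puts exactly the $i$th maximal point in $Q_i$, and the canonically labeled partition and its function determine each other. Yours is just more explicit, spelling out the inverse maps and the pigeonhole count that the paper leaves implicit.
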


\begin{proof}
    Giving $\mathbb{Z}/n\mathbb{Z}$ the discrete topology, a function $\mathcal{Q} \colon X_{\alpha,n} \to \mathbb{Z}/n\mathbb{Z}$ is continuous when each preimage $Q_i = \mathcal{Q}^{-1}(i)$ is clopen. If the $i$th maximal point belongs to $Q_i$, this is a dividing partition.

    Conversely, given two dividing partitions $\mathcal{P}$ and $\mathcal{Q}$, after relabeling so that the $i$th maximal point belongs to $P_i$ and $Q_i$, we see that the dividing partitions are equal if and only if the corresponding functions are identical.
\end{proof}

\begin{proof}[Proof of \Cref{prop:CARisHamming}]
    We define a bijection $\Phi$ from vertices of $\Gamma_n$, that is dividing partitions of $X_{1,n}$,
    to elements of $\bigoplus_\omega \mathbb{Z}/n\mathbb{Z} = VH_n$
    which we will show is a graph isomorphism.
    
    First, fix a dividing partition $\mathcal{P}$ to serve as a basepoint, and enumerate the nonmaximal points of $X_{1,n}$ as $\{x_k\}_{k\in\omega}$.
    Define
    \[ \Phi(\mathcal{Q}) = (\mathcal{Q}(x_k) - \mathcal{P}(x_k))_{k \in \omega}. \]
    Here the subtraction is done in the group $\mathbb{Z}/n\mathbb{Z}$ and we use \Cref{lem:partitionsarefunctions} to identify $\mathcal{P}$ and $\mathcal{Q}$ with their corresponding continuous functions $X_{1,n} \to \mathbb{Z}/n\mathbb{Z}$.
    By the pigeonhole principle, if $\Phi(\mathcal{Q})$ has infinitely many nonzero terms, infinitely many of them come from some $P_i$,
    contradicting the fact that (when canonically labeled), the functions
    $\mathcal{Q}$ and $\mathcal{P}$ are continuous and agree on the $i$th maximal point.
    The function $\Phi$ is thus well-defined.

    An inverse to $\Phi$ takes an element $(z_k)_{k\in\omega}$ of $\bigoplus_\omega\mathbb{Z}/n\mathbb{Z}$
    to the function $\mathcal{Q}\colon X_{1,n} \to \mathbb{Z}/n\mathbb{Z}$
    which agrees with $\mathcal{P}$ on the maximal points
    and assigns the nonmaximal point $x_k \in X_{1,n}$ to
    the partition element with index
    \[
    \mathcal{Q}(x_k) = \mathcal{P}(x_k) + z_k
    \]
    computed in $\mathbb{Z}/n\mathbb{Z}$.
    Since $\mathcal{Q}$ differs from $\mathcal{P}$
    on finitely many isolated points,
    $\mathcal{Q}$ is a dividing partition.
    These operations are mutual inverses,
     so $\Phi$ is a bijection.

    Two vertices
    $\Phi(\mathcal{Q})$ and $\Phi(\mathcal{Q}')$ are adjacent in $H_n$
    when their difference $\Phi(\mathcal{Q}) - \Phi(\mathcal{Q}')$
    has one nonzero entry.
    This nonzero entry corresponds to one isolated point $x$
    that moves from $\mathcal{Q}(x)$ to $\mathcal{Q}'(x)$.
    The partitions $\mathcal{Q}$ and $\mathcal{Q}'$ thus differ by a $0$-shift.
    Conversely, two partitions $\mathcal{Q}$ and $\mathcal{Q}'$ that
    differ by a $0$-shift have the property that $\Phi(\mathcal{Q}) - \Phi(\mathcal{Q}')$
    has exactly one nonzero entry.
    Thus the map $\Phi$ is a graph isomorphism.
\end{proof}

The previous proposition and \Cref{cor:CantorBendixson-QI}
have the following corollary.

\begin{cor}
    If $\alpha$ is a countable successor ordinal and $n > 1$,
    $H_n$ is a Cayley--Abels--Rosendal graph for $\Homeo(X_{\alpha,n})$.
    \qed
\end{cor}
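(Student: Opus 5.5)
The corollary follows by chaining results already established, so the plan is to assemble them rather than build anything new. Write $\alpha = \beta+1$ with $n>1$. By \Cref{prop:CARisHamming}, $H_n$ is isomorphic to $\Gamma_n = \Gamma(1,n)$, and $\Gamma(1,n)$ is a Cayley--Abels--Rosendal graph for $\Homeo(X_{1,n})$ by \cite[Section 4.3]{BDHL2025}. It therefore suffices to transport this structure from $\Homeo(X_{1,n})$ to $\Homeo(X_{\beta+1,n})$ along the surjection of \Cref{cor:CantorBendixson-QI}.

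First, I take the restriction homomorphism $\pi\colon \Homeo(X_{\beta+1,n}) \to \Homeo(X_{1,n})$, which is continuous and surjective and is given by the action on $X^{(\beta)}_{\beta+1,n}\cong X_{1,n}$. Through $\pi$, the group $\Homeo(X_{\beta+1,n})$ acts on $\Gamma_n \cong H_n$. This action is continuous, since it is the composition of a continuous homomorphism with a continuous action. It has one orbit of vertices and finitely many orbits of edges, because $\pi$ is surjective and so the orbits coincide with those of $\Homeo(X_{1,n})$.

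It remains to check that vertex stabilizers are coarsely bounded in $\Homeo(X_{\beta+1,n})$. The stabilizer of a vertex is $\pi^{-1}(\mathrm{Stab}(\mathcal{Q}))$. Here $\mathrm{Stab}(\mathcal{Q})$ is coarsely bounded in $\Homeo(X_{1,n})$ by \Cref{lem:stabCB}, and $\ker\pi$ is coarsely bounded by \Cref{cor:kerCB}. Since $\pi$ is a coarse equivalence by \Cref{prop:sesce}, preimages of coarsely bounded sets are coarsely bounded.

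An alternative route for this step avoids the coarse-equivalence argument. A dividing partition $\mathcal{Q}$ of $X_{1,n}$ lifts to a dividing partition $\tilde{\mathcal{Q}}$ of $X_{\beta+1,n}$. The stabilizer of $\tilde{\mathcal{Q}}$ is a finite-index subgroup of $\pi^{-1}(\mathrm{Stab}(\mathcal{Q}))$ up to the kernel, and is coarsely bounded by \Cref{lem:stabCB}.

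The only point requiring care is this stabilizer step, specifically the claim that preimages of CB sets under a quotient with CB kernel are CB. I expect it to follow directly from \Cref{prop:sesce}, or from writing the preimage as $(\ker\pi)\cdot \tilde{A}$, where $\tilde{A}$ is a CB set of lifts, for example the stabilizer of a lifted partition. Once that is in place, the conclusion follows from the definition of a Cayley--Abels--Rosendal graph.
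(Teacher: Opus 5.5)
Your proposal is correct and follows the paper's argument. The paper derives the corollary directly from \Cref{prop:CARisHamming} and \Cref{cor:CantorBendixson-QI}: $\Homeo(X_{\alpha,n})$ acts on $H_n \cong \Gamma(1,n)$ through the surjection $\pi$, whose kernel is coarsely bounded, and you have written out the verification of the Cayley--Abels--Rosendal axioms that this leaves implicit.

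One small correction to your aside. $\mathrm{Stab}(\tilde{\mathcal{Q}})$ generally has infinite index in $\pi^{-1}(\mathrm{Stab}(\mathcal{Q}))$, so the ``finite-index'' phrasing is wrong. However, the decomposition $\pi^{-1}(\mathrm{Stab}(\mathcal{Q})) = (\ker\pi)\,\mathrm{Stab}(\tilde{\mathcal{Q}})$ that you state at the end is correct, and it is all that alternative route needs.
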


\subsection{Bi-Lipschitz Hamming graphs}\label{sec:HammingCubes}

The purpose of this section is to prove that all of the graphs
$H_n$ for $n > 1$ are bi-Lipschitz equivalent. The proof uses a Cantor--Schroeder--Bernstein type result for locally infinite graphs and the following ``swindle''.

\begin{lem}\label{lem:sumsofHamming}
    The spaces \[
    \bigoplus_\omega \mathbb{Z}/n\mathbb{Z} \text{ and } \bigoplus_\omega\left(\bigoplus_\omega\mathbb{Z}/n\mathbb{Z}\right)
    \]
    are isometric
    when $\bigoplus_\omega \mathbb{Z}/n\mathbb{Z}$ is equipped with the Hamming distance and $\bigoplus_\omega\left(\bigoplus_\omega\mathbb{Z}/n\mathbb{Z}\right)$
    is equipped with the $\ell^1$ sum of Hamming distances.
\end{lem}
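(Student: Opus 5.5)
The plan is to reindex coordinates. Fix a bijection $\sigma\colon \omega \times \omega \to \omega$, for instance the Cantor pairing function. An element of $\bigoplus_\omega\left(\bigoplus_\omega \mathbb{Z}/n\mathbb{Z}\right)$ is a sequence $(\mathbf{x}^{(j)})_{j\in\omega}$ with the following properties: each $\mathbf{x}^{(j)} = (x^{(j)}_k)_{k\in\omega}$ lies in $\bigoplus_\omega\mathbb{Z}/n\mathbb{Z}$, and only finitely many $\mathbf{x}^{(j)}$ are nonzero. Equivalently, it is a function $\omega\times\omega \to \mathbb{Z}/n\mathbb{Z}$, $(j,k)\mapsto x^{(j)}_k$, with finite support. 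Indeed, a finite union of finite supports is finite, and conversely a finitely supported function on $\omega\times\omega$ has only finitely many nonzero rows, each finitely supported.

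Define $\Psi\big((\mathbf{x}^{(j)})_j\big) = (y_m)_{m\in\omega}$, where $y_{\sigma(j,k)} = x^{(j)}_k$. By the remark above, $\Psi$ takes values in $\bigoplus_\omega\mathbb{Z}/n\mathbb{Z}$. Its inverse is $(y_m)\mapsto (y_{\sigma(j,k)})_{j,k}$, so $\Psi$ is a bijection. In fact $\Psi$ is a group isomorphism, although we only need it to be a bijection.

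For the isometry, take two elements $\mathbf{X} = (\mathbf{x}^{(j)})$ and $\mathbf{X}' = (\mathbf{x}'^{(j)})$. The $\ell^1$ sum of Hamming distances is
\[
\sum_{j\in\omega} d_{\mathrm{Ham}}(\mathbf{x}^{(j)},\mathbf{x}'^{(j)}) = \sum_{j\in\omega}\#\{k : x^{(j)}_k \ne x'^{(j)}_k\} = \#\{(j,k) : x^{(j)}_k\ne x'^{(j)}_k\}.
\]
This sum is finite, since only finitely many terms are nonzero. Because $\sigma$ is a bijection, the last count equals $\#\{m : \Psi(\mathbf{X})_m \ne \Psi(\mathbf{X}')_m\}$, which is the Hamming distance between $\Psi(\mathbf{X})$ and $\Psi(\mathbf{X}')$. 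So $\Psi$ is an isometry.

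The only step that needs any care is checking that $\Psi$ and its inverse preserve finite support. This amounts to noting that "finitely many nonzero rows, each with finitely many nonzero entries" is the same as "finite support in $\omega\times\omega$". The remaining steps are bookkeeping.
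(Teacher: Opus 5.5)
Your proof is correct and follows the same approach as the paper, whose entire proof is the remark that the isometry comes from partitioning the countable index set into countably many copies of itself. Your bijection $\sigma\colon\omega\times\omega\to\omega$ is exactly that partition, and you have written out the routine checks (finite support and the $\ell^1$ count) that the paper leaves implicit.
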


\begin{proof}
    The isometry comes from partitioning the countably infinite index set
    into countably infinitely many copies of itself.
\end{proof}

If we consider Cayley graphs, equipping $\Z/n\Z$ with the complete generating set and each copy of $\bigoplus_\omega \mathbb{Z}/n\mathbb{Z}$ in the infinite direct sum with the generating set 
$S = \{ h\cdot e_k : k \in \mathbb{N}, h \in \mathbb{Z}/n\mathbb{Z} \}$,
the isometry above yields an isomorphism of Cayley graphs.
Noting that $\bigoplus_\omega \mathbb{Z}/n\mathbb{Z}=VH_n$, the previous lemma motivates the following definition of a direct sum of graphs.

\begin{DEF}
    Given a sequence of basepointed simplicial graphs $(\Gamma_i,\star_i)$ over an ordered set $I$, the simplicial graph $\bigoplus_{i\in I} \Gamma_i$ is the graph whose vertex set is the subset of $\prod_{i\in I} V\Gamma_i$ comprising those sequences $(v_i)_{i\in I}$ for which all but finitely many of the vertices $v_i$ satisfy $v_i = \star_i$, where $(v_i)$ is adjacent to $(w_i)$ if all but one of the coordinates are equal and the remaining pair of vertices $v_j \ne w_j$ are adjacent in $\Gamma_j$.
\end{DEF}

With this language and basepoint $\mathbf{0}$, \Cref{lem:sumsofHamming} shows that $H_n$ and $\bigoplus_\omega H_n$, are isomorphic. In general, the direct sum of Hamming graphs will again be a Hamming graph. By abuse of notation we will no longer distinguish between the graph $H_n$ and its vertex set and will always use $\mathbf{0}$ as a basepoint. 

Let $K_m$ denote the complete graph on $m$ vertices;
it is the Cayley graph of $\mathbb{Z}/m\mathbb{Z}$
with respect to the generating set $\{ h \in \mathbb{Z}/m\mathbb{Z} : h \ne 0 \}$.

\begin{lem}\label{lem:absorption}
    For any $n,m \geq 2$, there is a $4$-bi-Lipschitz bijection between the vertex sets of $H_n$ and $K_m \oplus H_n$.
\end{lem}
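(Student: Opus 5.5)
The approach is a Cantor--Schroeder--Bernstein argument: build bi-Lipschitz injections in both directions with small constants, then combine them into a bijection. The constant $4$ should come out as the product of the two distortions, or as twice the worse one.

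\textbf{Step 1: the easy injection.} Define $f\colon H_n \to K_m \oplus H_n$ by $f(\mathbf{x}) = (0,\mathbf{x})$. This is an isometric embedding. The graph metric on $K_m\oplus H_n$ is
\[
d\bigl((k,\mathbf{x}),(k',\mathbf{x}')\bigr) = [k\neq k'] + d(\mathbf{x},\mathbf{x}'),
\]
and $f$ only uses the slice $k=0$.

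\textbf{Step 2: the encoding injection.} Define $g\colon K_m\oplus H_n\to H_n$ by encoding the $K_m$-coordinate in \emph{unary} on the first $m-1$ coordinates, and placing $\mathbf{x}$ on the remaining coordinates through the shift $\omega\to\omega\setminus\{0,\dots,m-2\}$. Concretely, $0\mapsto \mathbf{0}$ and $k\mapsto 1\cdot e_{k-1}$ for $1\le k\le m-1$. Changing $k$ then changes at most two coordinates. So
\[
[k\neq k'] + d(\mathbf{x},\mathbf{x}') \;\le\; d\bigl(g(k,\mathbf{x}),g(k',\mathbf{x}')\bigr) \;\le\; 2[k\ne k']+d(\mathbf{x},\mathbf{x}'),
\]
and $g$ is a $2$-bi-Lipschitz embedding. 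The constant is independent of $m$ and $n$, which is why I use unary rather than $n$-ary encoding: $n$-ary encoding would cost roughly $\log_n m$.

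\textbf{Step 3: combining.} I would apply the usual König/Schroeder--Bernstein decomposition. Partition both vertex sets into chains under alternating applications of $f$ and $g$. On each chain, define the bijection $\Psi$ to be either $g$ or $f^{-1}$.

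\textbf{Main obstacle.} Step 3 is where the difficulty lies. The classical construction gives no control on $d(\Psi(u),\Psi(v))$ when $u$ and $v$ lie in chains handled by different maps. To fix this, I would make the decomposition compatible with the coordinate structure.

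The set $g(K_m\oplus H_n)$ is exactly $\{\mathbf{y} : \mathbf{y}$ restricted to the first $m-1$ coordinates is $\mathbf{0}$ or some $1\cdot e_j\}$. A point leaves this image only because of its finite pattern on the first $m-1$ coordinates. I would arrange, possibly after first using \Cref{lem:sumsofHamming} to rewrite $H_n\cong\bigoplus_\omega H_n$ so that the recursion happens blockwise, that membership in each piece of the decomposition is determined by a bounded amount of coordinate data. This data should change by at most a bounded amount along an edge.

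Given that, I would check adjacent vertices $u,v$ case by case. If both lie in the same piece, the distortion is at most $2$. If they lie in different pieces, both $g(u)$ and $f^{-1}(v)$ should be compared with a common point, and the triangle inequality should give distance at most $4$. The reverse inequality follows from the symmetric argument. Since both graphs are geodesic, checking the edge inequalities suffices for the upper bound. The lower bound then follows by applying the same argument to $\Psi^{-1}$.

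If this coordinatewise compatibility fails, my fallback is to invoke the Cantor--Schroeder--Bernstein-type result for locally infinite graphs alluded to before \Cref{lem:sumsofHamming}. I would feed it the $1$- and $2$-bi-Lipschitz embeddings $f$ and $g$.
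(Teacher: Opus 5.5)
Your Steps 1 and 2 are fine, but Step 3 is the entire content of the lemma, and with your choice of $g$ it cannot be completed. The problem is not which piece each vertex lands in. \emph{Any} bijection $\Psi$ with $\Psi(w)\in\{g(w),f^{-1}(w)\}$ for every $w$ fails to be Lipschitz. To see this, assume $(m,n)\neq(2,2)$; when $m=n=2$ your $g$ is already an isometric bijection. Take $\mathbf{x}$ whose restriction to the first $m-1$ coordinates is neither $\mathbf{0}$ nor of the form $1\cdot e_j$, so $\mathbf{x}\notin g(K_m\oplus H_n)$.
\begin{itemize}
\item Since $(1,\mathbf{x})\notin f(H_n)$, you are forced to set $\Psi(1,\mathbf{x})=g(1,\mathbf{x})$.
\item Since $\mathbf{x}$ must be hit but $g$ cannot hit it, you are forced to set $\Psi(0,\mathbf{x})=f^{-1}(0,\mathbf{x})=\mathbf{x}$.
\end{itemize}
These two vertices are adjacent. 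However, $g(1,\mathbf{x})$ contains a copy of $\mathbf{x}$ shifted by $m-1$ places, so $d(g(1,\mathbf{x}),\mathbf{x})$ grows without bound as the support of $\mathbf{x}$ becomes large and sparse. The underlying issue is that $g$ and $f^{-1}$ are not at bounded distance from each other. So there is no ``common point'' for your triangle-inequality step, and blockwise rewriting via \Cref{lem:sumsofHamming} does not change this. Your fallback is circular: the Cantor--Schroeder--Bernstein-type result for locally infinite graphs mentioned before \Cref{lem:sumsofHamming} is exactly the statement you are proving.

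The missing idea is to glue maps that are coarse inverses of each other, and to use that the graphs are locally infinite, which your plan never exploits. The paper pairs your $f$ with the non-injective forgetful map $(\ell,\mathbf{x})\mapsto\mathbf{x}$ and builds $\Psi$ by a back-and-forth enumeration.
\begin{itemize}
\item The next unassigned $\mathbf{x}$ is sent to $(0,\mathbf{x})$, or, if that is taken, to an unused neighbour of it.
\item The next unhit $(\ell,\mathbf{z})$ receives as preimage $\mathbf{z}$, or an unused neighbour of $\mathbf{z}$.
\end{itemize}
Only finitely many vertices are used at each stage and every vertex has infinitely many neighbours, so this never gets stuck. Every $\mathbf{x}$ then satisfies $\Psi(\mathbf{x})=(\ell,\mathbf{z})$ with $d(\mathbf{x},\mathbf{z})\le 1$. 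For adjacent $\mathbf{x},\mathbf{x}'$ with images $(\ell,\mathbf{z}),(\ell',\mathbf{z}')$, the path $(\ell,\mathbf{z}),(\ell',\mathbf{z}),(\ell',\mathbf{x}),(\ell',\mathbf{x}'),(\ell',\mathbf{z}')$ gives the constant $4$. For $\Psi^{-1}$, the path $\Psi^{-1}(\ell,\mathbf{z}),\mathbf{z},\mathbf{z}',\Psi^{-1}(\ell',\mathbf{z}')$ has length at most $3$.
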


The proof is a variant of the classic ``back-and-forth'' argument
used to prove the Cantor--Schroeder--Bernstein theorem. Let us remark that for our application below, we really need a \emph{bijection}. A general nonsurjective quasi-isometry, when we swindle, would leave an unbounded gap in the image of the product map.

\begin{proof}
    Let $\iota\colon H_n \hookrightarrow K_m\oplus H_n$ denote the inclusion map $\iota(x)=(0,x)$; it is an isometric embedding. 
    Let $f\colon K_m\oplus H_n \arr H_n$ be the forgetful map $f(\ell,x)=x$; it is $1$-Lipschitz.
    Observe that $f\circ\iota$ is the identity,
    while $\iota \circ f$ is $1$-Lipschitz.

    Begin by enumerating the vertices of $H_n$ as $\{x_i\}_{i\in \N}$ 
    and the vertices of $K_m \oplus H_n$ as $\{y_i\}_{i\in \N}$. Next we define a bijection $\psi:H_n \arr K_m \oplus H_n$ by declaring $\psi(\mathbf{0})=(0,\mathbf{0})$ and using the following ``back and forth" argument. Repeat the following two steps:

    \begin{enumerate}
        \item Let $x_i \in H_n$ be the vertex with smallest index for which $\psi(x_i)$ is not yet defined.
        If $\iota(x_i) = (0,x_i)$ is not yet equal to $\psi(x_j)$ for $j < i$, define $\psi(x_i) = \iota(x_i)$.

        Otherwise, there are infinitely many neighbors of $\iota(x_i)$, so we may choose one, say $y$, that is not yet equal to $\psi(x_j)$ for $j < i$ and define $\psi(x_i) = y$.

        \item Now let $y_i \in K_m \oplus H_n$ be the vertex with smallest index that is not already equal to $\psi(x_j)$ for some $j \le i$.
        If $f(y_i) \ne x_j$ for any $j \le i$, define $\psi(f(y_i)) = y_i$.

        Otherwise, there are infinitely many neighbors of $f(y_i)$, so we may choose one, say $x$, on which $\psi$ is not yet defined and define $\psi(x) = y_i$.
    \end{enumerate}

    Continue in this manner, alternately assigning the image of $x_i$'s and $y_i$'s to be within one of their image under the inclusion or forgetful map respectively.
    The map $\psi$ constructed by induction is a bijection.

    To see that $\psi$ is Lipschitz, it is enough to check what happens to adjacent vertices, say $x$ and $x'$. 
    To fix notation, let $\psi(x) = (\ell,z)$ and $\psi(x') = (\ell',z')$. By construction, $x$ and $z$ (respectively $x'$ and $z'$) are either equal or adjacent in $H_n$. Thus we list the vertices of a path in $K_m \oplus H_n$ between $\psi(x)$ and $\psi(x')$ as
    \[
    \psi(x) = (\ell,z), (\ell',z), (\ell',x), (\ell',x'), (\ell',z') =  \psi(x').
    \]
    This path has length at most $4$, so $\psi$ is $4$-Lipschitz, as desired.
    
    Similarly, if $y$ and $y'$ are adjacent in $K_m \oplus H_n$,
    we have a path
    \[
    \psi^{-1}(y), f(y), f(y'), \psi^{-1}(y')
    \]
    in $H_n$ of length at most $3$, so we see that $\psi^{-1}$
    is $3$-Lipschitz.
\end{proof}

\begin{thm}\label{thm:BLequiv}
    For each $n,m \geq 2$, the spaces $H_n$ and $H_m$ are $16$-bi-Lipschitz equivalent.
\end{thm}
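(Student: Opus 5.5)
The plan is to combine the absorption \Cref{lem:absorption} with the swindle \Cref{lem:sumsofHamming}, passing through an intermediate Hamming graph. The constant $16 = 4\cdot 4$ suggests composing two $4$-bi-Lipschitz bijections. The natural intermediate space is $H_{nm}$, or equivalently a graph built from both alphabets. The cleanest route goes through $\bigoplus_\omega (K_m \oplus H_n)$.

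First, I will show that $H_n$ and $\bigoplus_\omega (K_m\oplus H_n)$ are $4$-bi-Lipschitz equivalent. By \Cref{lem:sumsofHamming}, $H_n \cong \bigoplus_\omega H_n$ as graphs with basepoint $\mathbf{0}$. Let $\psi\colon H_n\to K_m\oplus H_n$ be the $4$-bi-Lipschitz bijection from \Cref{lem:absorption}, which satisfies $\psi(\mathbf{0})=(0,\mathbf{0})$. Apply $\psi$ coordinatewise to get $\Psi\colon \bigoplus_\omega H_n\to\bigoplus_\omega(K_m\oplus H_n)$.

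Because $\psi$ preserves basepoints, $\Psi$ sends finitely supported sequences to finitely supported sequences. Since $\psi$ is a bijection, its inverse is also applied coordinatewise, so $\Psi$ is a bijection. This is exactly where bijectivity and basepoint preservation matter.

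The metric on a direct sum of graphs is the $\ell^1$ sum of the coordinate metrics: a path changes one coordinate at a time along an edge, so distances add. The coordinatewise Lipschitz bounds therefore give $\Psi$ and $\Psi^{-1}$ Lipschitz constant $4$. The step to verify carefully is this $\ell^1$ identification for arbitrary basepointed graphs, which is routine.

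Next, I will identify $\bigoplus_\omega(K_m\oplus H_n)$ with a Hamming-type graph that is symmetric in $m$ and $n$. Unwinding the definitions, $K_m\oplus H_n = K_m\oplus\bigoplus_\omega K_n$. So the space is $\bigoplus_\omega\big(K_m\oplus \bigoplus_\omega K_n\big)$, which reorganizes, by reindexing the countable index set as in \Cref{lem:sumsofHamming}, into $\big(\bigoplus_\omega K_m\big)\oplus\big(\bigoplus_\omega K_n\big) = H_m\oplus H_n$.

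Running the same argument with the roles of $m$ and $n$ swapped gives a $4$-bi-Lipschitz bijection $H_m\to H_n\oplus H_m \cong H_m\oplus H_n$. Composing the first bijection with the inverse of the second yields a bijection $H_n\to H_m$ with both it and its inverse $16$-Lipschitz, as claimed.

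The main obstacle I expect is bookkeeping in the reindexing isomorphisms. These must be genuine graph isomorphisms that respect basepoints and the finite-support condition. Interleaving countably many copies of $\omega$, together with one extra coordinate per block, into a single countable index set handles this, just as in the proof of \Cref{lem:sumsofHamming}. Note that the direct-sum graph over a disjoint union of index sets is isomorphic to the iterated direct sum, since both are finitely supported sequences with one-coordinate-adjacency.
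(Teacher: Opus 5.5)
Your proposal is correct and matches the paper's proof essentially step for step. You apply the absorption bijection $\psi$ coordinatewise to get a $4$-bi-Lipschitz bijection $\bigoplus_\omega H_n \to \bigoplus_\omega (K_m\oplus H_n)\cong H_m\oplus H_n$, do the same with $m$ and $n$ swapped, and compose to get the constant $16$. Your explicit remark that $\psi(\mathbf{0})=(0,\mathbf{0})$ is what makes $\Psi$ well defined and bijective on finitely supported sequences fills in a point the paper leaves implicit. Your aside calling $H_{nm}$ ``equivalent'' to $H_m\oplus H_n$ is not literally an isomorphism, but you never use it.
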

    \begin{proof} First observe that the product map \begin{align*}
        \Psi\colon \bigoplus_\omega H_n &\to \bigoplus_\omega (K_m \oplus H_n) \\
        (z_i) &\mapsto (\psi(z_i))
    \end{align*} is a $4$-bi-Lipschitz bijection.
    
    Now we have the following isomorphisms of graphs (or of spaces where each sum is equipped with the $\ell^1$ metric). 
    \begin{align*}
    \bigoplus_\omega (K_m \oplus H_n)
    \cong
    \left(\bigoplus_\omega K_m\right)\oplus
    \left(\bigoplus_\omega H_n\right)
    \cong
        H_m\oplus H_n.
    \end{align*} 
    So $H_n$ is $4$-bi-Lipschitz equivalent to $H_m \oplus H_n$.
    By symmetry, $H_m$ is also $4$-bi-Lipschitz equivalent to $H_m \oplus H_n$. 
    Thus, $H_m$ and $H_n$ are $16$-bi-Lipschitz equivalent.
\end{proof}

\subsection{Assembling the pieces}

We are now ready to prove the successor case of \Cref{thm:mainclassification}.

\begin{thm}
    For any countable successor ordinal $\alpha$ and any $n > 1$,
    the groups $\Homeo(X_{\alpha,n})$ and $\Homeo(X_{1,2})$ are quasi-isometric. Furthermore, they are all quasi-isometric to $H_2$. 
\end{thm}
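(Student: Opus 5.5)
The plan is to chain together the results already established, using only that quasi-isometry is transitive. Write $\alpha = \beta+1$.

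First, by \Cref{cor:CantorBendixson-QI}, the surjection $\Homeo(X_{\beta+1,n}) \to \Homeo(X_{1,n})$ induced by the action on $X_{\beta+1,n}^{(\beta)} \cong X_{1,n}$ is a quasi-isometry. Its kernel is coarsely bounded by \Cref{cor:kerCB}, and \Cref{prop:sesce} applies because the group is CB generated. This reduces the problem to the case $\alpha = 1$.

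Second, we identify $\Homeo(X_{1,n})$ with $H_n$. By \cite[Section 4.3]{BDHL2025}, the graph $\Gamma(1,n)$ is a Cayley--Abels--Rosendal graph for $\Homeo(X_{1,n})$, so the two are quasi-isometric. In fact the orbit map $g \mapsto g.\mathcal{P}$ for a fixed dividing partition $\mathcal{P}$ is a quasi-isometry, since the action is continuous, has coarsely bounded stabilizers (\Cref{lem:stabCB}), is transitive on vertices, and has finitely many orbits of edges. By \Cref{prop:CARisHamming}, $\Gamma(1,n) = \Gamma_n$ is isomorphic to $H_n$. Hence $\Homeo(X_{1,n})$ is quasi-isometric to $H_n$ as a graph metric space.

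Third, we compare different $n$ using \Cref{thm:BLequiv}: $H_n$ and $H_2$ are $16$-bi-Lipschitz equivalent via a bijection of vertex sets, which is in particular a quasi-isometry. Composing the three steps gives
\[
\Homeo(X_{\alpha,n}) \simeq \Homeo(X_{1,n}) \simeq H_n \simeq H_2 \simeq \Homeo(X_{1,2}),
\]
where the last equivalence is the case $n=2$ of the second step. The countably infinite Hamming cube is exactly $H_2$ (binary sequences with finitely many ones, with edges joining sequences that differ in one coordinate), which gives the final sentence of the theorem.

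The main obstacle is making sure the Cayley--Abels--Rosendal quasi-isometry refers to the correct metric on the group, namely the word metric of a CB generating set, which is the canonical quasi-isometry class. This is supplied by \cite{BDHL2025} together with Rosendal's framework recalled in the background. The remaining steps are formal compositions.
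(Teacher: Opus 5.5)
Your proposal is correct and follows the paper's proof essentially step for step. You reduce to $\alpha=1$ via \Cref{cor:CantorBendixson-QI}, identify $H_n$ as a Cayley--Abels--Rosendal graph for $\Homeo(X_{1,n})$ via \Cref{prop:CARisHamming}, and pass from $H_n$ to $H_2$ using the $16$-bi-Lipschitz bijection of \Cref{thm:BLequiv}; your extra remarks on the orbit map and on the CB-generation needed for \Cref{prop:sesce} only make explicit what the paper uses implicitly.
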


\begin{proof}
    By~\Cref{cor:CantorBendixson-QI}, the group
    $\Homeo(X_{\alpha,n})$ is quasi-isometric to $\Homeo(X_{1,n})$.
    By \Cref{prop:CARisHamming},
    $H_n$ is a Cayley--Abels--Rosendal graph for $\Homeo(X_{1,n})$.
    Therefore we have that $H_n$, $\Homeo(X_{1,n})$ and $\Homeo(X_{\alpha,n})$
    are all quasi-isometric.
    By \Cref{thm:BLequiv}, $H_n$ is $16$-bi-Lipschitz equivalent
    to $H_2$, which is a Cayley--Abels--Rosendal graph for $\Homeo(X_{1,2})$.
\end{proof}

\section{The limit ordinal case}\label{sec:limitcase}

The main purpose of this section
is to prove the limit ordinal case of \Cref{thm:mainclassification}.
Along the way, we prove \Cref{thm:maintrees}
and an extension, \Cref{prop:orbit-map-CE}, to topological groups $G$ countably but not finitely generated over a proper open subgroup $H$ coarsely bounded in $G$.

We consider groups $G$
with a chain $G_1 \le G_2 \le \cdots$ of subgroups.
In this situation,
the following classical result of Bass--Serre theory applies.

\begin{lem}[proof of Theorem 15 \cite{Trees}]\label{lem:serretrees}
    Suppose that $G_1 \le G_2 \le \cdots$ is a chain of subgroups of a group $G$.
    The disjoint union of the sets of cosets $G / G_k$ organize into the vertices of a graph on which $G$ acts, 
    where each coset $g G_k$ is connected by an edge to the coset $g G_{k+1}$. When $\bigcup_{k=1}^\infty G_k = G$, the graph is a tree $T$ (in particular it is connected).

    When $G$ is a topological group, the action on $T$ is continuous if and only if each subgroup $G_k$ is open.
    When each $G_k$ is a proper subgroup of $G$, the action on $T$ is without global fixed point.
\end{lem}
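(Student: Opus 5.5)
I will verify the three assertions of \Cref{lem:serretrees} directly. The graph $T$ has vertex set $\bigsqcup_k G/G_k$ and an edge from $gG_k$ to $gG_{k+1}$. This is well defined: if $gG_k = g'G_k$, then $g^{-1}g' \in G_k \le G_{k+1}$, so $gG_{k+1} = g'G_{k+1}$. In particular each vertex $gG_k$ has exactly one ``upward'' edge. The $G$-action by left multiplication $h\cdot gG_k = hgG_k$ preserves levels and edges, so $G$ acts by graph automorphisms.

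\textbf{Step 1: $T$ is a tree.} I would first show that $T$ has no cycles. Assign to each vertex its level $k$. Every edge joins consecutive levels, and each vertex has a unique neighbor one level up. Take a reduced closed edge path and a vertex on it of maximal level. Both path-neighbors of that vertex lie one level below it, since levels change by exactly one per edge and the vertex is a local maximum. Moving along the path toward this vertex, each of those two neighbors has its unique upward edge pointing to it, which is what we expect. Now look instead at a vertex of \emph{minimal} level. Both of its path-neighbors lie one level above it, so both are its upward neighbor. They are therefore equal, which contradicts the path being reduced (no backtracking). Hence $T$ is a forest.

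For connectivity, assume $\bigcup_k G_k = G$. Given vertices $gG_k$ and $g'G_j$, choose $m \ge j,k$ large enough that $g^{-1}g' \in G_m$. Then $gG_m = g'G_m$. Following upward edges from $gG_k$ reaches $gG_m$, and following upward edges from $g'G_j$ reaches $g'G_m$, which is the same vertex. So $T$ is connected, and therefore a tree.

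\textbf{Step 2: continuity.} Give $T$'s vertex set the discrete topology. The action $G \times VT \to VT$ is continuous exactly when every vertex stabilizer is open. This holds because the orbit map $g \mapsto g\cdot v$ is continuous into a discrete set iff $\mathrm{Stab}(v)$ is open; joint continuity then follows from open stabilizers together with the translation structure. The stabilizer of $gG_k$ is $gG_kg^{-1}$, which is open iff $G_k$ is open, since conjugation is a homeomorphism. Edge stabilizers are intersections of vertex stabilizers, so they impose no further condition. This gives both directions.

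\textbf{Step 3: no global fixed point.} Suppose every $G_k$ is proper. If $G$ fixed a vertex $gG_k$, then $gG_kg^{-1} = G$, which forces $G_k = G$, a contradiction. If $G$ instead fixed the midpoint of an edge $\{gG_k, gG_{k+1}\}$, it would have to fix both endpoints, since the endpoints lie on different levels and the action preserves levels. That is again impossible.

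The only real subtlety is the no-cycles argument in Step 1. The main obstacle is phrasing it cleanly through the ``unique upward neighbor'' property applied at a vertex of minimal level.
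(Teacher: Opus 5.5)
Your proof is correct. The paper gives no argument of its own beyond citing the proof of Serre's Theorem 15, and it frames $T$ as the Bass--Serre tree of the ray of groups $G_1 \le G_2 \le \cdots$. You instead verify everything by hand. Your key observation is that each vertex $gG_k$ has exactly one neighbor at level $k+1$, so a circuit cannot turn around at a vertex of minimal level. This makes the tree property self-contained, with no appeal to the structure theory of graphs of groups.

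Your treatment of the other two clauses also fills in what the citation leaves unsaid:
\begin{itemize}
\item Continuity: the open-stabilizer criterion works because $VT$ is discrete, so $G \times VT$ is a disjoint union of copies of $G$ and joint continuity reduces to openness of the cosets $\{g : g.v = w\}$.
\item No global fixed point: the level-preservation argument rules out fixed vertices and fixed edges.
\end{itemize}

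Some cosmetic points:
\begin{itemize}
\item The paragraph about a vertex of maximal level does no work and can be deleted.
\item Run the minimal-level argument on a circuit, or on a cyclically reduced closed path. For a merely reduced closed path $v_0, \ldots, v_n = v_0$, the minimal vertex might be the basepoint. There, $v_1 = v_{n-1}$ is not excluded by reducedness.
\item In Step 3, the same no-inversion remark handles any interior point of an edge, not just its midpoint.
\end{itemize}
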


The quotient graph of groups $G \backslash T$ for $T$ the tree obtained via \Cref{lem:serretrees} is a ray with each vertex group $G_k$ equal to the outgoing edge group and including into $G_{k+1}$. The situation is pictured in \Cref{fig:bstree2}.

\subsection{Coarse equivalences} \label{ssec:generaltrees}

The purpose of this subsection is to answer the following questions of a group $G$ admitting an exhaustion
$G_1 \le G_2 \le \cdots$ by proper, open subgroups.
To what extent can $T$ (or $G$-orbits in $T$) be used to construct a geometric model for the coarse structure of $G$?
Can an orbit in $T$ itself ever be used?

Similar questions are addressed in~\cite{KopreskiShaji}. 
There, the authors consider vertex-transitive \emph{metric} graphs with edge lengths tending to infinity.
Our graphs are not vertex-transitive,
but we do not vary edge lengths.
Our proofs of theorems in this subsection use tools from \cite{KopreskiShaji}, but our proofs do not quite follow directly from their work.

\begin{figure}[ht!]\centering
        \def\svgwidth{\textwidth}
        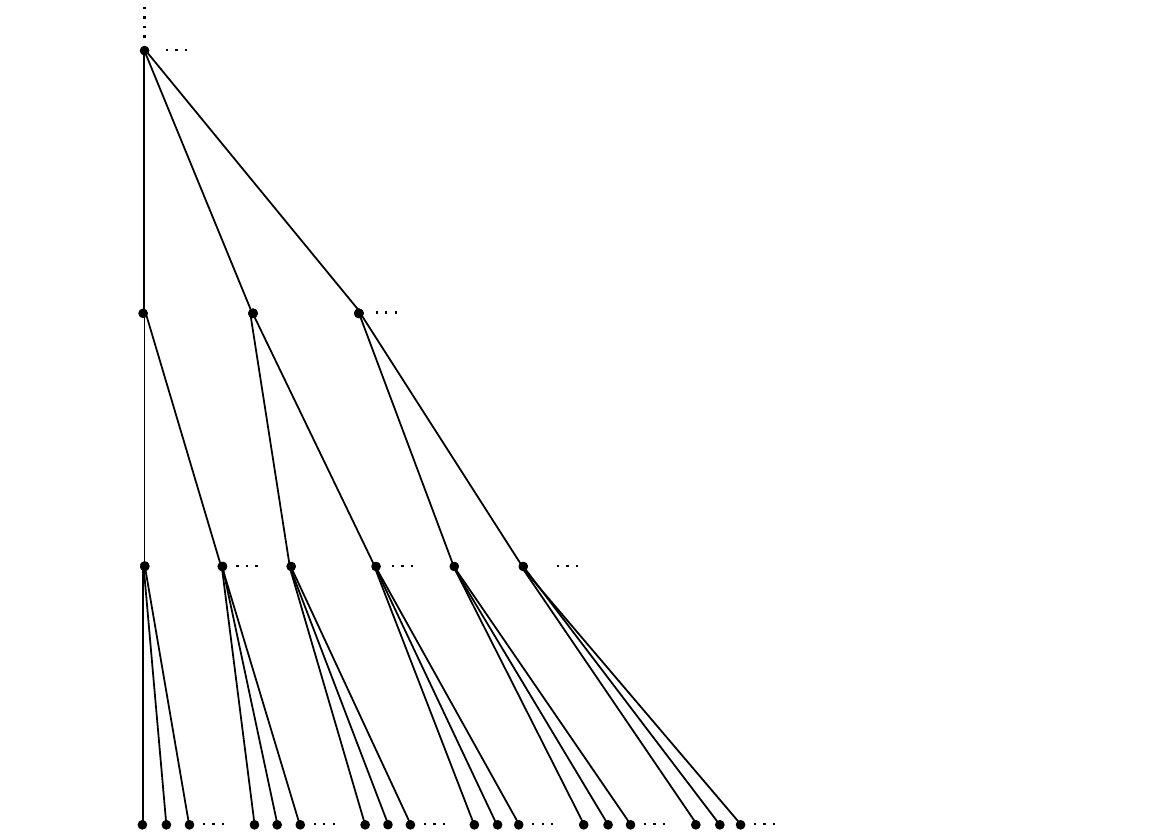
		\caption{A schematic of the Bass--Serre $T$ together with the quotient ray of groups $G \backslash T$ as in \Cref{lem:serretrees}. In the tree $T$, all of the bottommost vertices are indexed via cosets of $G_{1}$, then the next are indexed via cosets of $G_2$, etc. }
        \label{fig:bstree2}
\end{figure} 

The following restates \Cref{thm:maintrees}.

\begin{thm}\label{thm:CEtoleaves}
    Suppose that $G = \bigcup_{k=1}^\infty G_k$, where each $G_k$ is a proper, open subgroup coarsely bounded in $G$. Let $T$ be the tree obtained from \Cref{lem:serretrees}. Taking the vertex $v = G_1$ in $T$, the orbit map $g \mapsto g.v$ from the action of $G$ on $T$ is a coarse equivalence between $G$ with its canonical coarse structure and the set of leaves in $T$ equipped with the subspace metric.
\end{thm}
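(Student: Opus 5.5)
Throughout, I set $d$ for the path metric on $T$ restricted to the leaves, i.e.\ to the cosets $gG_1$. I first record the tree geometry. Two leaves $gG_1$ and $hG_1$ have a unique geodesic in $T$: it climbs from each leaf to the first level $k$ at which $gG_k = hG_k$, and then descends. Hence
\[
d(gG_1,hG_1) = 2\bigl(k(g^{-1}h) - 1\bigr), \qquad k(x) = \min\{k : x \in G_k\}.
\]
This formula holds because the ancestor of $gG_1$ at level $j$ is $gG_j$, and $T$ is a tree by \Cref{lem:serretrees}. The leaf set is exactly the orbit $G.v$, so the orbit map is surjective and in particular cobounded. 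It remains to check that it is bornologous and expanding with respect to the canonical (left) coarse structure of $G$.

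\textbf{Bornologous.} The orbit map of a continuous isometric action is bornologous for the left-coarse structure. Concretely, the function $(g,h) \mapsto d(g.v, h.v)$ is a left-invariant pseudometric on $G$. It is continuous because $G_1$ is open: the pseudometric is locally constant on $G_1$-cosets. Any continuous left-invariant pseudometric is dominated, in the coarse sense, by the canonical structure, i.e.\ it is bounded on coarsely bounded sets, uniformly under translation. If I prefer to use a compatible metric $\rho$ on $G$ directly, I argue as follows. Coarse boundedness of $\{x : \rho(1,x) \le r\}$ gives that this set has finite diameter for the pseudometric, which yields the upper control function $\eta$.

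\textbf{Expanding.} This is the step that uses that each $G_k$ is coarsely bounded in $G$. Suppose $d(g.v,h.v) \le 2(k-1)$. Then $g^{-1}h \in G_k$. Since $G_k$ is coarsely bounded, it has finite diameter $D_k$ in any fixed continuous left-invariant metric $\rho$ realizing the coarse structure, which exists because $G$ is locally CB: the open CB subgroup $G_1$ is a CB identity neighbourhood. Therefore $\rho(g,h) = \rho(1,g^{-1}h) \le D_k$. Inverting the increasing sequence $D_k$ gives a function $\theta$ with $d(g.v,h.v) \ge \theta(\rho(g,h))$, and $\theta \to \infty$ as $\rho \to \infty$. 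I would phrase this through Rosendal's criterion: a map is expanding exactly when preimages of bounded sets have bounded "spread", and here preimages of balls of radius $2(k-1)$ are left translates of $G_k$. I must also use that $G$ is exhausted by the $G_k$, so that every pair $g,h$ falls into some level.

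\textbf{Main obstacle.} I expect the main difficulty to be bookkeeping about which metric represents the canonical coarse structure, since $G$ is not assumed Polish. I will handle this by working directly with the coarse structure: entourages generated by the continuous left-invariant pseudometrics. For the bornologous direction I use that $d$ restricted to the orbit is such a pseudometric. For the expanding direction I use that each entourage $\{(g,h) : g^{-1}h \in G_k\}$ is a coarse entourage exactly because $G_k$ is CB (\cite[Proposition 2.15]{Rosendal}). Properness of the $G_k$ is used only to ensure that the leaf set is unbounded and the tree is one-ended with no global fixed point. That matches the statement, though it is not needed for the coarse equivalence itself.
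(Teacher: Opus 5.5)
Your proof is correct and follows the paper's argument. The orbit map is bornologous because the action is continuous and isometric, cobounded because it is onto the leaves, and expanding because a bound on $d(g.v,h.v)$ forces $g^{-1}h$ into a single coarsely bounded $G_k$. Your explicit formula $d(gG_1,hG_1)=2\bigl(k(g^{-1}h)-1\bigr)$ is a sharper form of the paper's observation that a short path from $v$ stays below one top vertex $G_L$, and your direct entourage check stands in for the paper's citations of Kopreski--Shaji's Lemmas 2.10 and 2.11.
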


\begin{proof}
    We work as in the proof of \cite[Proposition 4.3]{KopreskiShaji}.
    To be a coarse equivalence, the orbit map must \emph{a priori} satisfy three properties: bornologous, expanding and cobounded.
    By \cite[Lemma 2.10]{KopreskiShaji}, all isometric and continuous actions have bornologous orbit maps.
    Because the orbit map is surjective onto $G.v$, it is cobounded.
    By \cite[Lemma 2.11]{KopreskiShaji}, showing the orbit map is expanding is equivalent to showing that preimages of bounded subsets in $G.v$ are coarsely bounded in $G$. 
    We show this property holds to complete the proof.

    Let $B \subset G.v$ be bounded, and let $A \subset G$ be its full preimage. 
    Since $B$ is bounded, we have a uniform bound, say $M$, so that $d_T(v,a.v) \leq M$ for all $a\in A$. 
    We thus have that any path from $v$ to $a.v$ is contained in a single connected component of the subgraph $T_{\leq L}$ of $T$ spanned by vertices of height at most $L = \frac{M}{2}$.
    This connected component has a single vertex of maximal height $L$ corresponding to the coset $G_L$. 
    Thus $A$ stabilizes this vertex and is contained in the subgroup $G_L$. 
    As $G_L$ is coarsely bounded in $G$, so is $A$. 
\end{proof}

The above theorem partially answers our second question.
The reader interested only in \Cref{thm:mainclassification} may skip ahead to the next subsection.
To answer the question in the case that not all $G_k$ are coarsely bounded in $G$, we turn to the following setting.

Suppose now that $G$ has an open, coarsely bounded subgroup $H$. If $G$ is generated by $H$ together with a countably infinite set $\{s_n : n \in \mathbb{N}\}$,
we obtain an exhaustion $G_1 \le G_2 \le \cdots$,
where $G_k = \langle H, s_1,\ldots,s_k\rangle$,
so that \Cref{lem:serretrees} applies.
If some (and hence all sufficiently large) $G_k$
is \emph{not} coarsely bounded in $G$,
then we cannot conclude that orbit maps to $T$
are coarse equivalences.
Instead we will use the tree $T$ to construct a new graph $\Gamma$ equipped with a $G$-action, modeled on a tree-of-spaces construction \cite[Chapter II.11]{BridsonHaefliger}.

\begin{DEF}[Tree of Cayley--Abels--Rosendal Graphs]\label{DEF:TreeofCB}
    We continue with the notation $G$, $H$, $S$ and $T$ as above, and suppose that each $G_k$ is a proper subgroup of $G$.

    We now construct the graph $\Gamma$.
    Let $\mathbb{R}_+$ denote the graph with vertex set $\mathbb{N}$ and an oriented edge from $k$ to $k+1$ for all $k$.
    We begin with the graph $G/H \times \mathbb{R}_+$;
    the graph $\Gamma$ will be formed by adding more edges.
    We call edges already present in $G/H \times \mathbb{R}_+$ \emph{vertical},
    and edges which we will add \emph{horizontal,}
    since newly added edges will always connect
    vertices $(g H, k)$ whose second coordinates are equal.
    The group $G$ will thus act continuously on this graph by multiplication in the labels.

    The horizontal edges are defined similarly to those in \cite{BDHL2025}: we connect the vertex $(fH, k)$ to all vertices of the form $(fhs_i H, k)$,
    where $h \in H$, $s_i \in S$, and $i \le k$.
    We call the resulting graph $\Gamma$
    a \emph{tree of Cayley--Abels--Rosendal graphs} for $G$.
\end{DEF}
    
We record a number of features of this graph $\Gamma$.

Firstly, the graph is countable. This was a hypothesis for Cayley--Abels--Rosendal graphs in \cite{BDHL2025}, but here it is a conclusion of $G$ being countably generated over $H$.
    
Two $H$-cosets of the form $fH$ and $fhs_iH$ for $h \in H$ and $1\leq i \leq k$ must lie in the same $G_k$-coset. One can picture $\Gamma$ by taking the Bass--Serre tree in \Cref{fig:bstree2} and blowing up a vertex corresponding to a coset of $G_k$ in $G$,
replacing it with a connected graph whose vertex set is the countable set of cosets $G_k / H$.

The action of $G$ on $\Gamma$ is height-preserving and so has countably infinitely many orbits of vertices. Each vertex stabilizer is conjugate to the open subgroup $H$, so the action is continuous. 
    
Each horizontal edge corresponds to an element $s_i \in S$. Each vertex at height $k$ has a unique vertical edge connecting it to a vertex at height $k+1$, and when $k>1$, a unique vertical edge connecting to a vertex at height $k-1$. There is a height-preserving graph map $\Gamma \rightarrow T$ defined by collapsing all vertices which are connected by a horizontal path; i.e.\ all $H$-cosets which belong to the same $G_k$-coset.

The following is our generalization of \Cref{thm:CEtoleaves}.
    
\begin{prop}\label{prop:orbit-map-CE}
    With notation as above, 
    if $v \in \Gamma$ is a vertex,
    the orbit map $g \mapsto g.v$ from the action of $G$ on $\Gamma$ is a coarse equivalence between $G$ with its canonical coarse structure
    and $G.v$ equipped with the subspace metric.
\end{prop}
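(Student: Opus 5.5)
We follow the template of the proof of \Cref{thm:CEtoleaves}, which is itself modeled on \cite[Proposition 4.3]{KopreskiShaji}. The orbit map must be shown to be bornologous, cobounded and expanding.

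The first two properties come directly from the general tools. The action of $G$ on $\Gamma$ is continuous, since vertex stabilizers are conjugates of the open subgroup $H$, and it is by isometries. Hence \cite[Lemma 2.10]{KopreskiShaji} makes the orbit map bornologous. The orbit map surjects onto $G.v$, so it is cobounded. By \cite[Lemma 2.11]{KopreskiShaji}, it remains to show that the full preimage $A\subset G$ of a bounded subset $B\subset G.v$ is coarsely bounded in $G$.

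Since $G$ acts transitively on each height level of $\Gamma$, we may translate and assume $v=(H,k_0)$. Suppose $d_\Gamma(v,a.v)\le M$ for all $a\in A$. A geodesic from $v$ to $a.v$ has at most $M$ vertical edges, so it stays below height $L=k_0+M$. Pushing it up gives a path of controlled length at height exactly $L$. The key step is to project a path at height $\le L$ to height $L$ using the vertical ray structure. A vertex $(fH,k)$ sits below $(fH,L)$, since the vertical edge from $(fH,k)$ goes to $(fH,k+1)$. A horizontal edge at height $k$, from $(fH,k)$ to $(fhs_iH,k)$ with $i\le k\le L$, is also a horizontal edge at height $L$. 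A vertical edge projects to a single vertex. Therefore $d_\Gamma(v,a.v)\le M$ gives a horizontal path from $(H,L)$ to $(aH,L)$ of length at most $M$. This means $a\in (H S_L H)^{M}H$, where $S_L=\{s_1^{\pm1},\dots,s_L^{\pm1}\}$; we need the symmetric set because edges are undirected.

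The remaining step is to show that $(HS_LH)^MH$ is coarsely bounded in $G$. This holds because $H$ is CB in $G$, finite sets are CB, and products of finitely many CB subsets are CB \cite[Section 2]{Rosendal}. Then $A$ is CB and the orbit map is expanding.

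I expect the main obstacle to be the projection step, which must produce a single uniform bound independent of $a$. In particular, the horizontal edges must be defined so that an edge available at height $k$ remains available at every larger height. This holds because the allowed generators at height $k$ are $s_i$ with $i\le k$. Note that, unlike in \Cref{thm:CEtoleaves}, we never use coarse boundedness of $G_L$, only of the word ball $(HS_LH)^MH$. This is why the proposition holds even when the $G_k$ are not CB in $G$.
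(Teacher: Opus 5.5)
Your proposal is correct and matches the paper's argument: reduce via \cite[Lemmas 2.10 and 2.11]{KopreskiShaji} to showing that full preimages of bounded sets are CB, bound the height of a short path from $v$ to $a.v$ by some $L$, and read off $a$ as a word of bounded length in $H$ and the finitely many $s_i^{\pm1}$ with $i\le L$. Your projection to height $L$ is an explicit form of the paper's observation that vertical edges do not change the coset; the paper uses the sharper height bound $L=N+M/2$ and writes $(FH)^{M+1}$ with $F=\{s_1,\ldots,s_L\}$, and your symmetric set $S_L$ is actually the more careful choice.
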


\begin{proof}
    We again work as in the proof of \cite[Proposition 4.3]{KopreskiShaji} and \Cref{thm:CEtoleaves}. It suffices to show that the preimages of bounded subsets in $G.v$ are coarsely bounded in $G$. 

    Let $B \subset G.v$ be bounded, and let $A \subset G$ be its full preimage. 
    Since $B$ is bounded, we have a uniform bound, say $M$, so that $d_\Gamma(v,a.v) \leq M$ for all $a\in A$. 
    Assuming that the height of the vertex $v$ is $N$, we have that any path from $v$ to $a.v$ is contained in the subgraph, $\Lambda_{\leq L}$ of $\Gamma$ spanned by vertices of height at most $L = N + \frac{M}{2}$. 
    Each edge in $\Lambda_{\leq L}$ is either vertical, or a horizontal edge labeled with an element of the set $\{s_1,\ldots,s_L\} \subset S$. 
    Thus by taking $F = \{s_1,\ldots,s_L\}$ we have that $a \in (FH)^{M+1}$. 
    Since $H$ is coarsely bounded in $G$ and $F$ is finite, the set $(FH)^{M+1}$ is coarsely bounded in $G$ and so $A$ is also. 
\end{proof}

The following existence result provides a wealth of cases in which \Cref{DEF:TreeofCB} and \Cref{prop:orbit-map-CE} may be applied.

\begin{thm}
    Suppose that $G$ is a locally bounded, non-Archimedean Polish group. If $G$ is not generated by a CB set, there exists an open, coarsely bounded subgroup $H \le G$ and a countably infinite set $S = \{s_k\}$ satisfying the conditions in \Cref{DEF:TreeofCB} and \Cref{prop:orbit-map-CE}. Therefore, there is a tree of Cayley--Abels--Rosendal graphs $\Gamma$ for $G$.
\end{thm}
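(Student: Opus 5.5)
Since $G$ is non-Archimedean, the open subgroups form a neighborhood basis of the identity. Since $G$ is locally bounded (locally CB), it has some coarsely bounded neighborhood $U$ of the identity. Choose an open subgroup $H \subseteq U$. Then $H$ is open, and it is coarsely bounded in $G$ because subsets of CB sets are CB. This produces the subgroup $H$ required in \Cref{DEF:TreeofCB}.

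Next I would produce the countable set $S$. Since $G$ is Polish, it is separable, and $H$ is open, so the coset space $G/H$ is countable. Choose one representative from each left coset of $H$ and call this countable set $S'$. Then $H \cup S'$ generates $G$. Since $G$ is not generated by any CB set, $G$ is not generated by $H$ together with finitely many elements, because $H \cup F$ with $F$ finite is CB. In particular $S'$ is infinite.

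I would enumerate $S'$ as $\{s_1, s_2, \ldots\}$ and set $G_k = \langle H, s_1, \ldots, s_k \rangle$. Each $G_k$ contains $H$, so it is open. Each $G_k$ is proper: if $G_k = G$, then $G$ would be generated by the CB set $H \cup \{s_1, \ldots, s_k\}$, a contradiction. The union of the $G_k$ is $G$ because $H \cup S'$ generates $G$.

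Thus $H$ and $S = S'$ satisfy the standing hypotheses of \Cref{DEF:TreeofCB}: $H$ is open and CB, $G$ is countably generated over $H$, and every $G_k$ is proper. \Cref{DEF:TreeofCB} then constructs the graph $\Gamma$, and \Cref{prop:orbit-map-CE} shows that $\Gamma$ models the coarse structure of $G$. Its proof used only these hypotheses: the finiteness of $\{s_1, \ldots, s_L\}$ and the coarse boundedness of $H$.

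The only step needing care is the first: making sure that the open subgroup sits inside a CB set, so that $H$ is CB in $G$ and not merely CB in itself. Taking $H$ inside a CB neighborhood $U$ of the identity, as above, handles this. The Polish hypothesis is used only to make $G/H$ countable.
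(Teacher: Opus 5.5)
Your proposal is correct and follows the paper's proof essentially step for step. You take an open subgroup $H$ inside a CB identity neighborhood, use separability to get a countable generating set over $H$, and observe that $H$ together with finitely many generators is CB and so cannot generate $G$, which makes every $G_k$ proper; your explicit choice of $S$ as coset representatives of $G/H$, and your observation that $S$ must therefore be infinite, fill in details the paper leaves implicit.
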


\begin{proof}
    Since $G$ is locally bounded, there is a coarsely bounded identity neighborhood $U \subset G$. Since $G$ is non-Archimedean, $U$ contains an open subgroup $H$ that is also coarsely bounded. Since $G$ is Polish, it is countably generated over $H$, say by some countable set $S = \{s_n : n \in \mathbb{N}\}$. The subgroup $H$ together with any finite subset of $S$ is a coarsely bounded set, so cannot generate $G$ by assumption. Therefore each subgroup $G_k = \langle H, s_1,\ldots,s_k\rangle $ is a proper subgroup of $G$, so that \Cref{DEF:TreeofCB} applies.
\end{proof}

\subsection{Limit ordinals}\label{ssec:limitfinish}

The purpose of this subsection is to prove the limit ordinal case of \Cref{thm:mainclassification}.
Suppose $\lambda = \lim_{k\to\infty} \alpha_k$ is a countable limit ordinal.

Fix a dividing partition $\mathcal{P}$ of $X_{\lambda,n}$.
The stabilizer of $\mathcal{P}$, call it $H$, is open and coarsely bounded in $\Homeo(X_{\lambda,n})$ by~\Cref{lem:stabCB}.
Given $k \in \mathbb{N}$, choose an $\alpha_k$-shift $s_k$ in $\Homeo(X_{\lambda,n})$.
Because $\lim_{k\to\infty}\alpha_k = \lambda$,
we have \[\bigcup_{k=1}^\infty G_k = \Homeo(X_{\lambda,n}),\]
where $G_k = \langle H, s_1,\ldots,s_k\rangle$.
However, each $G_k$ is a proper open subgroup of $\Homeo(X_{\lambda,n})$.

As we showed in \cite[Proposition 19]{BDHL2025},
these subgroups show that $\Homeo(X_{\lambda,n})$
is locally CB but not generated by any coarsely bounded set.
Now the limit case of \Cref{thm:mainclassification} will follow from an application of \Cref{thm:CEtoleaves} utilizing these subgroups.

\begin{lem}\label{lem:GkareCB}
    For each $k$, the subgroup $G_k$ is CB in $\Homeo(X_{\lambda,n})$.
\end{lem}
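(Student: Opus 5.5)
We will show that $G_k = \langle H, s_1,\ldots,s_k\rangle$ is contained in the set of homeomorphisms that act trivially (in the appropriate sense) on points of sufficiently high rank, and then appeal to \Cref{lem:boundedorbits} in the form of \Cref{boundedorbitsobservation}. Since $\alpha_1<\alpha_2<\cdots<\alpha_k<\lambda$, choose an ordinal $\beta$ with $\alpha_k<\beta<\lambda$ (for instance $\beta=\alpha_k+1$, which is below $\lambda$ because $\lambda$ is a limit). The key observation is that every generator preserves, for each $i$, the set of rank-$\beta$ points lying in $P_i$, where $\mathcal{P}=P_1\sqcup\cdots\sqcup P_n$ is the fixed dividing partition.

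First, for $h\in H$ the dividing partition $\mathcal{P}$ is stabilized, so $h$ permutes the pieces $P_i$. After passing to the finite-index subgroup of \Cref{lem:stabCB} (which fixes each $P_i$) we only need to handle a finite extra set of coset representatives, which is harmless for coarse boundedness. Second, each $s_j$ is an $\alpha_j$-shift. As in \cite{BDHL2025}, $s_j$ may be taken supported so that it only moves points of rank at most $\alpha_j<\beta$ between pieces. Concretely, $s_j$ carries $\mathcal{P}$ to a partition differing by an $\alpha_j$-shift along a subspace $S\cong X_{\alpha_j,1}$, which contains no points of rank $\beta$. Hence $s_j$ maps the rank-$\beta$ points of each $P_i$ into $P_i$. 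In other words, writing $R_i$ for the rank-$\beta$ points in $P_i$, the group $G_k$ is contained in the subgroup $K$ of homeomorphisms permuting the family $\{R_i\}$, and an index-at-most-$n!$ subgroup $K_0$ preserves each $R_i$.

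Now, for $f\in K_0$, the partitions $\mathcal{P}$ and $f.\mathcal{P}$ have the property that each rank-$\beta$ point of $P_i$ lies in $f(P_i)$ and vice versa. By \Cref{lem:boundedorbits} they therefore differ by at most $n(n-1)$ $\beta$-shifts. Fixing one $\beta$-shift $t$, the argument of \Cref{boundedorbitsobservation} (whose proof only uses this partition statement, not literal triviality on rank-$\beta$ points) gives $K_0\subset(\{t^{\pm1}\}H)^{n(n-1)}H$, which is coarsely bounded since $H$ is CB and $t$ is a single element. Adding finitely many coset representatives keeps $K$ coarsely bounded, so $G_k\subset K$ is CB.

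The main obstacle is the second step: justifying that each $s_j$ can be chosen to fix the decomposition of rank-$\beta$ points, or more robustly, that this property holds for whatever $\alpha_j$-shift was chosen. If the choice of $s_j$ is arbitrary, I would instead argue that $s_j.\mathcal{P}$ differs from $\mathcal{P}$ by an $\alpha_j$-shift. Then $s_j=h_1 t_j h_2$ with $t_j$ a standard shift supported near $S$ and $h_1,h_2\in H$, reducing to the standard case. If this fails, I would fall back on the coarser statement that the whole of $G_k$ lies in $(\{t^{\pm1}\}H)^{N}$ by verifying the rank-$\beta$ condition directly on generators and noting that it is closed under products.
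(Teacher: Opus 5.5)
Your proposal is correct and is essentially the paper's proof: the paper likewise observes that $\mathcal{P}$ and $g\mathcal{P}$ agree on all rank-$\alpha_{k+1}$ points for every $g\in G_k$, applies \Cref{lem:boundedorbits}, and concludes $G_k\subset(FH)^{n^2}$ with $F=\{\id,s_1,\ldots,s_{k+1}\}$, using the already-chosen shift $s_{k+1}$ where you use $\beta=\alpha_k+1$ and an auxiliary $\beta$-shift $t$. Your closing worry is unfounded, since your ``concretely'' sentence already works for any $s_j$ with $s_j.\mathcal{P}$ an $\alpha_j$-shift of $\mathcal{P}$, and the detour through $K_0$ is unnecessary because \Cref{lem:boundedorbits} only requires reindexing the pieces (but do include $\id$ among your finitely many elements, since the shift path may be shorter than $n(n-1)$).
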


\begin{proof}
    Fix $k$ and let $g \in G_k$ be arbitrary. By the definition of $G_k$, $\cP$ and $g\cP$ do not differ on any rank-$\alpha_{k+1}$ points. This allows us to apply \Cref{lem:boundedorbits} in order to write $g$ as a product of at most $n(n-1)$ $\alpha_{k+1}$-shifts. Setting $F = \{\id,s_1,\ldots,s_{k+1}\}$ we thus have that $G_k$ is contained in the coarsely bounded set $(F H)^{n^2}$.
\end{proof}

\begin{thm}\label{thm:limitcase}
    Let $\lambda$ be a limit ordinal and $n>1$, then $\Homeo(X_{\lambda,n})$ is coarsely equivalent to the set of leaves of the regular one-ended tree of countably infinite valence. 
\end{thm}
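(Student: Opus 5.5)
We apply \Cref{thm:CEtoleaves} to the exhaustion $G_1 \le G_2 \le \cdots$ of $G=\Homeo(X_{\lambda,n})$ with $G_k = \langle H, s_1,\ldots,s_k\rangle$ from the preceding discussion. Its hypotheses are the following. Each $G_k$ is open because it contains the open subgroup $H$. Each $G_k$ is proper, as recorded above (from \cite[Proposition 19]{BDHL2025}). Each $G_k$ is coarsely bounded in $G$ by \Cref{lem:GkareCB}. Finally, the $G_k$ exhaust $G$. \Cref{thm:CEtoleaves} then says that $G$ is coarsely equivalent to the set of leaves of the Bass--Serre tree $T$ of \Cref{lem:serretrees}, i.e.\ the orbit of $v=G_1$. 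It remains to identify $T$ with the regular one-ended tree of countably infinite valence, with the leaves corresponding to the cosets $G/G_1$.

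\textbf{The leaves and valences of $T$.} A vertex $gG_k$ is adjacent to $gG_{k+1}$ above it and to the cosets $ghG_{k-1}$ for $h\in G_k$ below it. The downward neighbours of $gG_k$ are therefore in bijection with $G_k/G_{k-1}$. So the vertices with no downward neighbours, which are the leaves, are exactly the cosets of $G_1$. Every vertex $gG_k$ with $k\ge 2$ has valence $1+[G_k:G_{k-1}]$.

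\textbf{One-endedness.} Any geodesic ray in $T$ eventually moves upward forever, since a ray that keeps descending must terminate at height $1$. Two upward rays starting at $gG_k$ and $g'G_{k'}$ eventually agree, because $g^{-1}g'$ lies in some $G_m$, and from height $m$ on both rays pass through the same cosets. Hence $T$ is one-ended.

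\textbf{The index computation.} It remains to show that $[G_{k}:G_{k-1}]$ is countably infinite for every $k\ge 2$; this is the main obstacle. Countability follows because $G_{k-1}$ is open and $G$ is separable (Polish), so $G/G_{k-1}$ is countable.

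For infiniteness I would first try to show that $G_{k-1}$ moves rank-$\alpha_k$ points only within boundedly many dividing partitions of $\mathcal{P}$. Concretely, elements of $G_{k-1}$ preserve, up to finitely many shifts of lower rank, which partition element each rank-$\alpha_{k}$ point lies in; this is the same mechanism as in the proof of \Cref{lem:GkareCB}. By contrast, the powers $s_k^j$ shift $j$ copies of $X_{\alpha_k,1}$ from $P_1$ to $P_n$. The aim is then to show that $s_k^j G_{k-1} \neq s_k^{i} G_{k-1}$ for $i\ne j$. For this I would use an invariant: the "net flux" of rank-$\alpha_k$ points from $P_1$ to $P_n$ relative to $\mathcal{P}$, which is well defined because only finitely many rank-$\alpha_k$ points change partition element. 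This flux should be zero on $G_{k-1}$ and equal to $j$ on $s_k^j$. The delicate part is making sure $H$ itself, which can permute partition elements, does not spoil this invariant. To handle that, I would pass to the finite-index subgroup of $H$ fixing each $P_i$, or use flux modulo permutations, which still yields infinitely many distinct cosets.

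If arranging the $\alpha_k$ causes any trouble, I would first pass to a subsequence of $(\alpha_k)$. This does not change the coarse equivalence class, since the resulting tree is still one-ended with countably infinite valences. With the index computation done, $T$ is the regular one-ended tree of countably infinite valence, and the theorem follows.
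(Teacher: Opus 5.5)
Your proposal is correct and follows the paper's route: apply \Cref{thm:CEtoleaves} to the exhaustion $G_k=\langle H,s_1,\ldots,s_k\rangle$ using \Cref{lem:GkareCB}, then identify the Bass--Serre tree as the regular one-ended tree of countably infinite valence. Your coset invariant justifies infinite index more carefully than the paper's one-line appeal to $s_k$ having infinite order: elements of $G_{k-1}$ never change which partition element contains a given rank-$\alpha_k$ point, while $s_k^m$ does for $m\neq 0$, so $s_k^m\notin G_{k-1}$. If you label partition elements by the maximal point they contain, your worry about $H$ permuting elements disappears, so there is no need to pass to a subgroup of $H$.
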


\begin{proof}
    The realization of $\Homeo(X_{\lambda,n})$ as $\bigcup_{k=1}^{\infty} G_k$ together with \Cref{lem:GkareCB} allows us to apply \Cref{thm:CEtoleaves}. Thus we see that $\Homeo(X_{\lambda,n})$ is coarsely equivalent to the set of leaves of the Bass--Serre tree $T$ given in \Cref{lem:serretrees}. Since $\alpha_k$-shifts have infinite order, the index of $G_k$ in $G_{k+1}$ is infinite, so $T$ is the regular one-ended tree of countably infinite valence. 
\end{proof}

This finishes the proof of the limit case of \Cref{thm:mainclassification}. 

\section{Looking towards surfaces}

\label{sec:surfaces}

One motivation for studying homeomorphism groups of Stone spaces is to better understand mapping class groups of infinite-type surfaces. For an infinite-type surface $\Sigma$, its space of ends, $E(\Sigma)$, is a second countable Stone space. Furthermore, the mapping class group, $\MCG(\Sigma)$, surjects onto the homeomorphism group of its end space, giving the following short exact sequence. 
\begin{align*}
    1 \longrightarrow \PMCG(\Sigma) \longrightarrow \MCG(\Sigma) \longrightarrow \Homeo(E(\Sigma),E_g(\Sigma)) \longrightarrow 1
\end{align*}
where $E_{g}(\Sigma) \subset E(\Sigma)$ is the (closed) subspace corresponding to ends accumulated by genus (often referred to as non-planar ends) and $\PMCG(\Sigma)$ denotes the pure mapping class group of $\Sigma$. One may hope to use this short exact sequence in order to upgrade the above results to the setting of mapping class groups of infinite-type surfaces with countable end spaces. Unfortunately, as seen below, this does not work directly in most cases. 
That said, the techniques and results above may still offer a strategy for better understanding the coarse geometry of mapping class groups of infinite-type surfaces. In particular, given that we show that the coarse geometry of homeomorphism groups of countable Stone spaces degenerates into only three categories, if one hopes for any type of quasi-isometric rigidity among mapping class groups of infinite-type surfaces, one \emph{must} make use of the surface topology. 

We first recall a tool helpful for exhibiting the failure of a subset to be coarsely bounded. A \emph{continuous length function} on a topological group $G$ is a continuous map $\ell: G \rightarrow [0,\infty)$ such that $\ell(\id)=0$, $\ell(g) = \ell(g^{-1})$, and $\ell(gh) \leq \ell(g) + \ell(h)$ for all $g,h\in G$. If $\ell$ is a length function on $G$ such that $\ell$ is unbounded on a subset $H \subset G$, then $H$ is \emph{not} coarsely bounded in $G$. In particular, given a continuous length function $\ell$ on $G$, one can obtain a continuous left-invariant pseudometric by taking $d(g,h) = \ell(g^{-1}h)$ for all $g,h\in G$. 

Now we can utilize a length function defined in \cite[Section 2]{MR2023} to immediately show that the quotient map above is not a coarse equivalence for a large family of surfaces. We say that a subsurface $K \subset \Sigma$ is \emph{nondisplaceable} if $K$ and $f(K)$ intersect essentially for all $f \in \Homeo(\Sigma)$. 

\begin{lem}\label{lem:forgetfulsurface}
    If $\Sigma$ has a finite-type nondisplaceable subsurface, then $\PMCG(\Sigma)$ is not coarsely bounded in $\MCG(\Sigma)$. In particular, the quotient map $\MCG(\Sigma) \rightarrow \Homeo(E(\Sigma),E_g(\Sigma))$ is not a coarse equivalence. 
\end{lem}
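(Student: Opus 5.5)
We will exhibit a continuous length function on $\MCG(\Sigma)$ that is unbounded on $\PMCG(\Sigma)$. Then the second statement follows from \Cref{prop:sesce}: $\PMCG(\Sigma)$ is the closed normal kernel of the quotient map, and that map is a coarse equivalence if and only if the kernel is coarsely bounded.

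Let $K \subset \Sigma$ be the given finite-type nondisplaceable subsurface. Following \cite[Section 2]{MR2023}, we use the length function
\[
\ell(f) = \max\{ d_{\mathcal{C}(K)}(\pi_K(\alpha), \pi_K(f(\alpha))) \}.
\]
Here $\pi_K$ is the subsurface projection to the curve graph of $K$, and the maximum is taken over a fixed finite collection of curves whose projections fill $K$. Equivalently, one can take $\ell(f) = d_{\mathcal{C}(K)}(\mu, f(\mu))$ for a fixed marking $\mu$ of $K$. Nondisplaceability guarantees that $f(K)$ meets $K$ essentially for every $f$, so each $\pi_K(f(\alpha))$ is nonempty and $\ell$ is defined for all $f$.

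Mann--Rafi verify that $\ell$ is continuous (locally constant, since it depends only on the images of finitely many curves) and that it is symmetric and subadditive up to a uniform additive error. In particular $d(f,g)=\ell(f^{-1}g)$ is a continuous left-invariant pseudometric up to bounded error, which suffices to detect failure of coarse boundedness. We cite this rather than reprove it.

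The key step is to show that $\ell$ is unbounded on $\PMCG(\Sigma)$. Since $\Sigma$ is infinite type and $K$ has finite type, $K$ is a proper essential subsurface that is not a pair of pants or annulus; otherwise it could not be nondisplaceable in a meaningful way, and one checks that the curve graph of $K$ is infinite diameter. Choose a pseudo-Anosov mapping class $\phi$ of $K$, which exists by Thurston since $\mathcal{C}(K)$ has infinite diameter, and extend it by the identity outside $K$. This extension fixes every end, so $\phi \in \PMCG(\Sigma)$. Because $\phi$ acts loxodromically on $\mathcal{C}(K)$ by Masur--Minsky, $\ell(\phi^k) \to \infty$. So $\ell$ is unbounded on $\PMCG(\Sigma)$, and $\PMCG(\Sigma)$ is not CB in $\MCG(\Sigma)$.

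The main obstacle is the degenerate case where $K$ has low complexity, for instance an annulus or a sphere with few boundary components. There one must check that $K$ can be taken to support a pseudo-Anosov, or else replace it by a slightly larger nondisplaceable finite-type subsurface containing it; supersets of nondisplaceable subsurfaces remain nondisplaceable. If $K$ is an annulus, we would instead use the annular curve graph and a Dehn twist about its core. The twist is again pure, and its powers have unbounded annular projection distance.
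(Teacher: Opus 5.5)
Your route is the paper's route. You take a Mann--Rafi length function attached to a nondisplaceable finite-type $K$ and a pseudo-Anosov supported on (an enlargement of) $K$. Extending it by the identity gives a pure mapping class of unbounded length, and \Cref{prop:sesce} gives the second sentence.

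The step that fails is the explicit function you attribute to Mann--Rafi. The function $f\mapsto d_K(\mu,f(\mu))$, which projects only to $K$, is neither symmetric nor subadditive, even up to additive error. The reason is that $d_K(g\mu,gh\mu)=d_{g^{-1}(K)}(\mu,h\mu)$ is a projection to a \emph{different} subsurface. Concretely, let $Y=g^{-1}(K)$ be a translate overlapping $K$, and let $h$ be a pseudo-Anosov supported on $Y$. The Behrstock inequality keeps $d_K(\mu,h^n\mu)$ bounded, while $d_K(\mu,gh^n\mu)=d_Y(g^{-1}\mu,h^n\mu)\to\infty$. There is a related slip: nondisplaceability only makes $\pi_K(f(\mu))$ nonempty for the whole filling collection $\mu$, not $\pi_K(f(\alpha))$ for each curve $\alpha$.

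The fix is to let the function see every translate of $K$ at once. Nondisplaceability is exactly what makes all of these projections defined. For instance, $L(f)=\sup_{g}d_{g(K)}(\mu,f(\mu))$ has the following properties:
\begin{itemize}
\item it is finite;
\item it is symmetric and genuinely subadditive;
\item it is constant on left cosets of the open stabilizer of $\mu$;
\item it dominates $d_K(\mu,f(\mu))$.
\end{itemize}
With $L$, or with Mann--Rafi's function cited without a formula as the paper does, your pseudo-Anosov argument goes through unchanged.

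Three smaller corrections.
\begin{itemize}
\item Your claim that a nondisplaceable $K$ cannot be a pair of pants is false. In the genus-zero surface with end space $X_{1,3}$, a pair of pants whose three complementary components each contain one maximal end is nondisplaceable.
\item The remedy in your last paragraph is correct: enlarge $K$, since supersets of nondisplaceable subsurfaces remain nondisplaceable and $\Sigma$ has infinite type. This is exactly the paper's ``finite-type subsurface of sufficient complexity''. The annular/Dehn-twist alternative is then unnecessary, and it would need an annular version of the length function that you have not justified.
\item $K$ may have punctures that are ends of $\Sigma$, and a pseudo-Anosov of $K$ can permute them. So the extension lies in $\PMCG(\Sigma)$ only if you take the pseudo-Anosov in $\PMCG(K)$ (e.g.\ a suitable power), as the paper does.
\end{itemize}
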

\begin{proof}
    In \cite[Section 2]{MR2023}, given a finite-type nondisplaceable subsurface $K \subset \Sigma$, the authors construct a continuous length function $\ell$ on $\MCG(\Sigma)$ that is unbounded on the subgroup $\<f\>$ for any mapping class $f \in \MCG(\Sigma)$ that preserves $K$ and restricts to a pseudo-Anosov mapping class on $K$. Now, picking a finite-type subsurface $K$ in $\Sigma$ of sufficient complexity, there exists a pseudo-Anosov mapping class $g_0 \in \PMCG(K)$. By extending $g_0$ via the identity we obtain a pure mapping class $g \in \PMCG(\Sigma)$ for which $\ell$ is unbounded on $\<g\>$. 
\end{proof}

Next we use length functions to check that the puncture-forgetting ``Cantor--Bendixson derivative'' does not induce a coarse equivalence on mapping class groups of surfaces, as opposed to the case of countable Stone spaces. 

\begin{lem}\label{lem:canbendsurface}
    Let $\Sigma$ be a surface with at least one isolated planar end and a finite-type nondisplaceable subsurface and let $\Sigma'$ be the surface obtained from $\Sigma$ by forgetting all isolated planar ends. The kernel of the forgetful map $\mathcal{F}: \MCG(\Sigma) \rightarrow \MCG(\Sigma')$ is not coarsely bounded and hence $\mathcal{F}$ does not induce a coarse equivalence. 
\end{lem}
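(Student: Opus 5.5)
The plan mirrors the proof of \Cref{lem:forgetfulsurface}: produce a continuous length function on $\MCG(\Sigma)$ that is unbounded on an explicit cyclic subgroup of $\ker\mathcal{F}$. Throughout, $\ker\mathcal{F}$ is the closed normal subgroup of mapping classes that become trivial once the isolated planar ends are filled in. Once such a length function is found, the left-invariant pseudometric $d(g,h)=\ell(g^{-1}h)$ witnesses that $\ker\mathcal{F}$ is not coarsely bounded. By \Cref{prop:sesce}, the quotient map $\MCG(\Sigma)\to \MCG(\Sigma)/\ker\mathcal{F}$ is then not a coarse equivalence. Its image is the closed subgroup $\mathcal{F}(\MCG(\Sigma))\le \MCG(\Sigma')$, so $\mathcal{F}$ itself is not a coarse equivalence onto its image.

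The key steps are as follows.

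\begin{enumerate}
\item \textbf{Choose the subsurface.} Enlarge the given finite-type nondisplaceable subsurface to a finite-type subsurface $K\subset\Sigma$ that is still nondisplaceable (enlarging preserves nondisplaceability). Arrange that $K$ contains a puncture $p$ corresponding to an isolated planar end, and that $K$ has enough complexity to carry pseudo-Anosovs. If needed, absorb a neighborhood of $p$ into $K$ along an arc.

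\item \textbf{Build a pseudo-Anosov in the kernel.} The point-pushing (Birman) subgroup $\pi_1(\hat K,p)\to\PMCG(K)$ lies in the kernel of forgetting $p$. Here $\hat K$ is $K$ with $p$ filled in. By Kra's theorem, pushing $p$ along a filling loop in $\hat K$ gives a pseudo-Anosov $g_0\in\PMCG(K)$. Extend $g_0$ by the identity to $g\in\MCG(\Sigma)$.

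\item \textbf{Check $g\in\ker\mathcal{F}$.} Forgetting all isolated planar ends forgets $p$, and $g_0$ is a point-push of $p$, so it becomes isotopic to the identity on $\hat K$. Since $g$ is the identity outside $K$, we get $\mathcal{F}(g)=\id$. One subtlety is that $K$ may contain other isolated planar punctures. Forgetting them as well still kills $g_0$, because the push becomes trivial already after forgetting $p$.

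\item \textbf{Conclude.} Apply the Mann--Rafi length function $\ell$ from \cite[Section 2]{MR2023} for the nondisplaceable subsurface $K$. It is unbounded on $\langle g\rangle$ because $g$ preserves $K$ and restricts to a pseudo-Anosov there. Hence $\ker\mathcal{F}\supseteq\langle g\rangle$ is not coarsely bounded.
\end{enumerate}

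The main obstacle is step 1: ensuring that a nondisplaceable finite-type $K$ can contain an isolated planar puncture while keeping enough complexity for Kra's construction. For this we need $\hat K$ to be neither a disk nor an annulus, i.e.\ $\pi_1(\hat K)$ non-cyclic with a filling loop. I would handle this by taking the union of the given nondisplaceable subsurface with a regular neighborhood of an arc to $p$, then adding one further puncture or boundary component if necessary. I would also double-check that the Mann--Rafi hypothesis requires exactly ``$f$ preserves $K$ and restricts to a pseudo-Anosov'', which holds here since $g$ is supported in $K$.
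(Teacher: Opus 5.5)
Your proposal is correct and follows the paper's proof essentially step for step. You take a finite-type nondisplaceable $K$ containing an isolated planar end, point-push that end along a filling curve to get (by Kra) a pseudo-Anosov supported in $K$ that lies in $\ker\mathcal{F}$, and apply the Mann--Rafi length function. One small simplification: for the final deduction you need neither \Cref{prop:sesce} nor your unverified claim that $\mathcal{F}(\MCG(\Sigma))$ is closed, since a coarse equivalence is expanding and so would force $\ker\mathcal{F}=\mathcal{F}^{-1}(\id)$ to be coarsely bounded.
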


\begin{proof}
    We repeat the same proof as above, now using a finite-type nondisplaceable subsurface $K \subset \Sigma$ that contains at least one isolated planar end, $e$. Any map obtained by point pushing $e$ about a filling curve of $K$ is a pseudo-Anosov mapping class in $\PMCG(K)$ \cite{Kra1981} and hence can be extended via the identity to a map in $\ker(\mathcal{F})$ on which the Mann--Rafi length function is unbounded. 
\end{proof}

Both of these lemmas require the existence of a nondisplaceable subsurface. In \cite{BNQR2026}, the authors introduce a notion of ``complexity'' for stable infinite-type surfaces with CB-generated mapping class groups. Roughly speaking, the natural number $\zeta(\Sigma)$ measures the minimal complexity of a finite-type subsurface used to define a CB-generating set. We refer the reader to \cite[Section 2.5]{BNQR2026} for the formal definition. Notably, when $\Sigma$ has either zero or infinite genus, if $\zeta(\Sigma)=1$, then $\MCG(\Sigma)$ is itself CB, if $\zeta(\Sigma) = 2$, then $\Sigma$ does not contain a finite-type nondisplaceable subsurface, and if $\zeta(\Sigma)\geq 3$, then $\Sigma$ has a finite-type nondisplaceable subsurface. Furthermore, if $\zeta(\Sigma)=1$ then either $E_{g}(\Sigma) = E(\Sigma)$ or $E_g(\Sigma) = \emptyset$ and $E(\Sigma)$ is self-similar. Thus $\homeo(E(\Sigma))$ is also itself CB. This leaves one case in which we have not covered in the above lemmas, raising the question asked in the introduction, repeated below. 

\begin{customq}{1}
    Let $\Sigma$ be a stable surface with $\zeta(\Sigma)=2$ whose mapping class group is CB-generated but not coarsely bounded. Is $\PMCG(\Sigma)$ coarsely bounded in $\MCG(\Sigma)$? More specifically, let $\Sigma_{1,2}$ be the genus-zero surface with end space $X_{1,2}$. Is $\PMCG(\Sigma_{1,2})$ coarsely bounded inside of $\MCG(\Sigma_{1,2})$? 
\end{customq}

Surfaces $\Sigma$ as in the statement are exactly the ``translatable surfaces'' studied in \cite{SC2024} and ``avenue surfaces'' considered in \cite{HQR2022}. In \cite{SC2024} the author provides a Cayley--Abels--Rosendal graph for these mapping class groups. While we would be surprised if the answer to the above question were ``yes,'' the current tools used in \cite{MR2023} to certify that a mapping class group is not CB cannot be used to certify that these pure mapping class groups are not CB. In particular, in \cite[Section 7]{MR2023} the authors define an unbounded length function on $\MCG(\Sigma)$ that is actually bounded on $\PMCG(\Sigma)$. 

\bibliographystyle{alpha}
\bibliography{bib.bib}

\end{document}

%% file: preamble.tex
\usepackage{amsmath}
\usepackage{amsfonts}
\usepackage{bbold}
\usepackage[mathscr]{eucal}
\usepackage{amsthm}
\usepackage{amssymb}
\usepackage{hyperref}
\usepackage{cleveref}
\usepackage[margin=1.3in]{geometry}
\usepackage[dvipsnames]{xcolor}
\usepackage{graphicx}
\usepackage{mathtools}
\usepackage{multirow}
\usepackage[shortlabels]{enumitem}
\usepackage{import}
\usepackage{tikz-cd}
\usetikzlibrary{decorations.pathreplacing,calc}
\usepackage{soul}
\usepackage{accents}
\usepackage{tikz-cd}

\usepackage{amsthm}
\usepackage{thmtools}
\usepackage{thm-restate}

\usepackage{comment}

\newcounter{COMMENTS}

\makeatletter
\def\firsttwoletters#1#2\@nil{%
  \uppercase{#1}%
  \ifx\relax#2\relax
  \else
    \uppercase{\expandafter\@firstoftwo\expandafter{\expandafter\@car\string#2\@nil}}%
  \fi
}

\newcommand{\newComment}[3]{%
  \expandafter\newcommand\csname #1\endcsname[1]{
    \textbf{%
      \color{#3}(\firsttwoletters#2\@nil\theCOMMENTS)%
    }%
    \marginpar{\scriptsize\raggedright\textbf{%
      {\color{#3}(\expandafter\uppercase\expandafter{\firsttwoletters#2\@nil}\theCOMMENTS)#1: 
      }} ##1}%
    \stepcounter{COMMENTS}%
  }%
  \expandafter\newcommand\csname #2\endcsname[1]{{\color{#3} ##1}}
}
\makeatother

\newComment{Beth}{bb}{Plum}
\newComment{Robbie}{rl}{cyan}
\newComment{George}{gd}{orange}
\newComment{Hannah}{hh}{WildStrawberry}

\usepackage{tcolorbox} 
\newlist{todolist}{itemize}{2}
\setlist[todolist]{label=$\square$}
\usepackage{pifont}
\newtheorem{thm}{Theorem}[section]
\newtheorem{lem}[thm]{Lemma}
\newtheorem{cor}[thm]{Corollary}
\newtheorem{prop}[thm]{Proposition}

\theoremstyle{definition}
\newtheorem{DEF}[thm]{Definition}

\newtheorem{innercustomthm}{Theorem}
\newenvironment{customthm}[1]
  {\renewcommand\theinnercustomthm{#1}\innercustomthm}
  {\endinnercustomthm}

\newtheorem{innercustomcor}{Corollary}
\newenvironment{customcor}[1]
  {\renewcommand\theinnercustomcor{#1}\innercustomcor}
  {\endinnercustomcor}

\newtheorem{innercustomq}{Question}
\newenvironment{customq}[1]
  {\renewcommand\theinnercustomq{#1}\innercustomq}
  {\endinnercustomq}

\DeclareMathOperator{\homeo}{Homeo}

\DeclareMathOperator{\Homeo}{Homeo}

\DeclareMathOperator{\id}{Id}

\DeclareMathOperator{\MCG}{MCG}
\DeclareMathOperator{\PMCG}{PMCG}
\DeclareMathOperator{\Fix}{Fix}

\newcommand{\Z}{\mathbb{Z}}
\newcommand{\G}{\Gamma}
\newcommand{\N}{\mathbb{N}}

\newcommand{\cP}{\mathcal{P}}
\newcommand{\cQ}{\mathcal{Q}}

\newcommand{\cS}{\mathcal{S}}

\renewcommand{\>}{\rangle}

\newcommand{\arr}{\rightarrow}
\DeclareMathOperator{\fix}{Fix}

%% file: fig/Gamma2_partitions.pdf_tex
\begingroup%
  \makeatletter%
  \providecommand\color[2][]{%
    \errmessage{(Inkscape) Color is used for the text in Inkscape, but the package 'color.sty' is not loaded}%
    \renewcommand\color[2][]{}%
  }%
  \providecommand\transparent[1]{%
    \errmessage{(Inkscape) Transparency is used (non-zero) for the text in Inkscape, but the package 'transparent.sty' is not loaded}%
    \renewcommand\transparent[1]{}%
  }%
  \providecommand\rotatebox[2]{#2}%
  \newcommand*\fsize{\dimexpr\f@size pt\relax}%
  \newcommand*\lineheight[1]{\fontsize{\fsize}{#1\fsize}\selectfont}%
  \ifx\svgwidth\undefined%
    \setlength{\unitlength}{558.90643262bp}%
    \ifx\svgscale\undefined%
      \relax%
    \else%
      \setlength{\unitlength}{\unitlength * \real{\svgscale}}%
    \fi%
  \else%
    \setlength{\unitlength}{\svgwidth}%
  \fi%
  \global\let\svgwidth\undefined%
  \global\let\svgscale\undefined%
  \makeatother%
  \begin{picture}(1,0.22214623)%
    \lineheight{1}%
    \setlength\tabcolsep{0pt}%
    \put(0,0){\includegraphics[width=\unitlength,page=1]{Gamma2_partitions.pdf}}%
    \put(0.2873615,0.09763074){\color[rgb]{0,0,0}\makebox(0,0)[lt]{\lineheight{1.25}\smash{\begin{tabular}[t]{l}$0$\end{tabular}}}}%
    \put(0,0){\includegraphics[width=\unitlength,page=2]{Gamma2_partitions.pdf}}%
    \put(0.37511782,0.09763074){\color[rgb]{0,0,0}\makebox(0,0)[lt]{\lineheight{1.25}\smash{\begin{tabular}[t]{l}$1$\end{tabular}}}}%
    \put(0.45978406,0.09763074){\color[rgb]{0,0,0}\makebox(0,0)[lt]{\lineheight{1.25}\smash{\begin{tabular}[t]{l}$2$\end{tabular}}}}%
    \put(0.54383244,0.09763074){\color[rgb]{0,0,0}\makebox(0,0)[lt]{\lineheight{1.25}\smash{\begin{tabular}[t]{l}$3$\end{tabular}}}}%
    \put(0.20949327,0.09763074){\color[rgb]{0,0,0}\makebox(0,0)[lt]{\lineheight{1.25}\smash{\begin{tabular}[t]{l}$-1$\end{tabular}}}}%
    \put(0.12654178,0.09632822){\color[rgb]{0,0,0}\makebox(0,0)[lt]{\lineheight{1.25}\smash{\begin{tabular}[t]{l}$-\infty$\end{tabular}}}}%
    \put(0.65593781,0.0965127){\color[rgb]{0,0,0}\makebox(0,0)[lt]{\lineheight{1.25}\smash{\begin{tabular}[t]{l}$\infty$\end{tabular}}}}%
    \put(0,0){\includegraphics[width=\unitlength,page=3]{Gamma2_partitions.pdf}}%
    \put(0.87198465,0.20721995){\color[rgb]{0,0,0}\makebox(0,0)[lt]{\lineheight{1.25}\smash{\begin{tabular}[t]{l}$\mathcal{P}$\end{tabular}}}}%
    \put(0.78677362,0.11664128){\color[rgb]{0,0,0}\makebox(0,0)[lt]{\lineheight{1.25}\smash{\begin{tabular}[t]{l}$\mathcal{R}$\end{tabular}}}}%
    \put(0.35467985,0.16494991){\color[rgb]{0,0,0}\makebox(0,0)[lt]{\lineheight{1.25}\smash{\begin{tabular}[t]{l}$\mathcal{P}$\end{tabular}}}}%
    \put(0.26007548,0.14817608){\color[rgb]{0,0,0}\makebox(0,0)[lt]{\lineheight{1.25}\smash{\begin{tabular}[t]{l}$\mathcal{R}$\end{tabular}}}}%
  \end{picture}%
\endgroup%

%% file: fig/Gamma2_Zsubsets.pdf_tex
\begingroup%
  \makeatletter%
  \providecommand\color[2][]{%
    \errmessage{(Inkscape) Color is used for the text in Inkscape, but the package 'color.sty' is not loaded}%
    \renewcommand\color[2][]{}%
  }%
  \providecommand\transparent[1]{%
    \errmessage{(Inkscape) Transparency is used (non-zero) for the text in Inkscape, but the package 'transparent.sty' is not loaded}%
    \renewcommand\transparent[1]{}%
  }%
  \providecommand\rotatebox[2]{#2}%
  \newcommand*\fsize{\dimexpr\f@size pt\relax}%
  \newcommand*\lineheight[1]{\fontsize{\fsize}{#1\fsize}\selectfont}%
  \ifx\svgwidth\undefined%
    \setlength{\unitlength}{281.59816712bp}%
    \ifx\svgscale\undefined%
      \relax%
    \else%
      \setlength{\unitlength}{\unitlength * \real{\svgscale}}%
    \fi%
  \else%
    \setlength{\unitlength}{\svgwidth}%
  \fi%
  \global\let\svgwidth\undefined%
  \global\let\svgscale\undefined%
  \makeatother%
  \begin{picture}(1,0.66023114)%
    \lineheight{1}%
    \setlength\tabcolsep{0pt}%
    \put(0,0){\includegraphics[width=\unitlength,page=1]{Gamma2_Zsubsets.pdf}}%
    \put(0.25259592,0.63211853){\color[rgb]{0,0,0}\makebox(0,0)[lt]{\lineheight{1.25}\smash{\begin{tabular}[t]{l}$\mathcal{P}=\emptyset$\end{tabular}}}}%
    \put(0.22243551,0.43693064){\color[rgb]{0,0,0}\makebox(0,0)[lt]{\lineheight{1.25}\smash{\begin{tabular}[t]{l}$\mathcal{R} = \{0\}$\end{tabular}}}}%
    \put(0.52636425,0.45199693){\color[rgb]{0,0,0}\makebox(0,0)[lt]{\lineheight{1.25}\smash{\begin{tabular}[t]{l}$\{2\}$\end{tabular}}}}%
    \put(0.36974931,0.45131146){\color[rgb]{0,0,0}\makebox(0,0)[lt]{\lineheight{1.25}\smash{\begin{tabular}[t]{l}$\{1\}$\end{tabular}}}}%
    \put(0.42144503,0.19059662){\color[rgb]{0,0,0}\makebox(0,0)[lt]{\lineheight{1.25}\smash{\begin{tabular}[t]{l}$\{0,2\}$\end{tabular}}}}%
    \put(0.26963304,0.19059662){\color[rgb]{0,0,0}\makebox(0,0)[lt]{\lineheight{1.25}\smash{\begin{tabular}[t]{l}$\{0,1\}$\end{tabular}}}}%
    \put(0.69751485,0.44887186){\color[rgb]{0,0,0}\makebox(0,0)[lt]{\lineheight{1.25}\smash{\begin{tabular}[t]{l}$\{3\}$\end{tabular}}}}%
    \put(0.33272311,0.00603447){\color[rgb]{0,0,0}\makebox(0,0)[lt]{\lineheight{1.25}\smash{\begin{tabular}[t]{l}$\{0,1,2\}$\end{tabular}}}}%
    \put(-0.00094647,0.45595274){\color[rgb]{0,0,0}\makebox(0,0)[lt]{\lineheight{1.25}\smash{\begin{tabular}[t]{l}$\{-1\}$\end{tabular}}}}%
    \put(0.14578637,0.19059662){\color[rgb]{0,0,0}\makebox(0,0)[lt]{\lineheight{1.25}\smash{\begin{tabular}[t]{l}$\{-1,1\}$\end{tabular}}}}%
    \put(0.00129857,0.19059662){\color[rgb]{0,0,0}\makebox(0,0)[lt]{\lineheight{1.25}\smash{\begin{tabular}[t]{l}$\{-1,0\}$\end{tabular}}}}%
    \put(0.53796741,0.19059662){\color[rgb]{0,0,0}\makebox(0,0)[lt]{\lineheight{1.25}\smash{\begin{tabular}[t]{l}$\{1,2\}$\end{tabular}}}}%
    \put(0.68711607,0.19059662){\color[rgb]{0,0,0}\makebox(0,0)[lt]{\lineheight{1.25}\smash{\begin{tabular}[t]{l}$\{0,3\}$\end{tabular}}}}%
    \put(0.47721091,0.00603447){\color[rgb]{0,0,0}\makebox(0,0)[lt]{\lineheight{1.25}\smash{\begin{tabular}[t]{l}$\{0,1,3\}$\end{tabular}}}}%
    \put(0.83892812,0.19059662){\color[rgb]{0,0,0}\makebox(0,0)[lt]{\lineheight{1.25}\smash{\begin{tabular}[t]{l}$\{1,3\}$\end{tabular}}}}%
    \put(0.18771003,0.00603447){\color[rgb]{0,0,0}\makebox(0,0)[lt]{\lineheight{1.25}\smash{\begin{tabular}[t]{l}$\{-1,0,1\}$\end{tabular}}}}%
  \end{picture}%
\endgroup%

%% file: fig/Gamma2_hamming.pdf_tex
\begingroup%
  \makeatletter%
  \providecommand\color[2][]{%
    \errmessage{(Inkscape) Color is used for the text in Inkscape, but the package 'color.sty' is not loaded}%
    \renewcommand\color[2][]{}%
  }%
  \providecommand\transparent[1]{%
    \errmessage{(Inkscape) Transparency is used (non-zero) for the text in Inkscape, but the package 'transparent.sty' is not loaded}%
    \renewcommand\transparent[1]{}%
  }%
  \providecommand\rotatebox[2]{#2}%
  \newcommand*\fsize{\dimexpr\f@size pt\relax}%
  \newcommand*\lineheight[1]{\fontsize{\fsize}{#1\fsize}\selectfont}%
  \ifx\svgwidth\undefined%
    \setlength{\unitlength}{356.865739bp}%
    \ifx\svgscale\undefined%
      \relax%
    \else%
      \setlength{\unitlength}{\unitlength * \real{\svgscale}}%
    \fi%
  \else%
    \setlength{\unitlength}{\svgwidth}%
  \fi%
  \global\let\svgwidth\undefined%
  \global\let\svgscale\undefined%
  \makeatother%
  \begin{picture}(1,0.39222423)%
    \lineheight{1}%
    \setlength\tabcolsep{0pt}%
    \put(0.27350122,0.37093431){\color[rgb]{0,0,0}\makebox(0,0)[lt]{\lineheight{1.25}\smash{\begin{tabular}[t]{l}$\mathcal{P}=(\ldots,0,\check{0},0,0,0\ldots)$\end{tabular}}}}%
    \put(-0.00232605,0.22061468){\color[rgb]{0,0,0}\makebox(0,0)[lt]{\lineheight{1.25}\smash{\begin{tabular}[t]{l}$\mathcal{R}=(\ldots,0,\check{1},0,0,0,\ldots)$\end{tabular}}}}%
    \put(0.52850101,0.22175556){\color[rgb]{0,0,0}\makebox(0,0)[lt]{\lineheight{1.25}\smash{\begin{tabular}[t]{l}$(\ldots,0,\check{0},0,1,0\ldots)$\end{tabular}}}}%
    \put(0.30134783,0.00457008){\color[rgb]{0,0,0}\makebox(0,0)[lt]{\lineheight{1.25}\smash{\begin{tabular}[t]{l}$(\ldots,0,\check{1},0,1,0\ldots)$\end{tabular}}}}%
    \put(0,0){\includegraphics[width=\unitlength,page=1]{Gamma2_hamming.pdf}}%
  \end{picture}%
\endgroup%

%% file: fig/bstree.pdf_tex
\begingroup%
  \makeatletter%
  \providecommand\color[2][]{%
    \errmessage{(Inkscape) Color is used for the text in Inkscape, but the package 'color.sty' is not loaded}%
    \renewcommand\color[2][]{}%
  }%
  \providecommand\transparent[1]{%
    \errmessage{(Inkscape) Transparency is used (non-zero) for the text in Inkscape, but the package 'transparent.sty' is not loaded}%
    \renewcommand\transparent[1]{}%
  }%
  \providecommand\rotatebox[2]{#2}%
  \newcommand*\fsize{\dimexpr\f@size pt\relax}%
  \newcommand*\lineheight[1]{\fontsize{\fsize}{#1\fsize}\selectfont}%
  \ifx\svgwidth\undefined%
    \setlength{\unitlength}{556.28749096bp}%
    \ifx\svgscale\undefined%
      \relax%
    \else%
      \setlength{\unitlength}{\unitlength * \real{\svgscale}}%
    \fi%
  \else%
    \setlength{\unitlength}{\svgwidth}%
  \fi%
  \global\let\svgwidth\undefined%
  \global\let\svgscale\undefined%
  \makeatother%
  \begin{picture}(1,0.7236752)%
    \lineheight{1}%
    \setlength\tabcolsep{0pt}%
    \put(0,0){\includegraphics[width=\unitlength,page=1]{bstree.pdf}}%
    \put(-0.00072543,0.00964508){\color[rgb]{0,0,0}\makebox(0,0)[lt]{\lineheight{1.25}\smash{\begin{tabular}[t]{l}$G/G_{1}$\end{tabular}}}}%
    \put(-0.00128779,0.23053515){\color[rgb]{0,0,0}\makebox(0,0)[lt]{\lineheight{1.25}\smash{\begin{tabular}[t]{l}$G/G_{2}$\end{tabular}}}}%
    \put(-0.00152728,0.44876395){\color[rgb]{0,0,0}\makebox(0,0)[lt]{\lineheight{1.25}\smash{\begin{tabular}[t]{l}$G/G_{3}$\end{tabular}}}}%
    \put(-0.00109492,0.67408961){\color[rgb]{0,0,0}\makebox(0,0)[lt]{\lineheight{1.25}\smash{\begin{tabular}[t]{l}$G/G_{4}$\end{tabular}}}}%
    \put(0,0){\includegraphics[width=\unitlength,page=2]{bstree.pdf}}%
    \put(0.91907695,0.00423958){\color[rgb]{0,0,0}\makebox(0,0)[lt]{\lineheight{1.25}\smash{\begin{tabular}[t]{l}$G_{1}$\end{tabular}}}}%
    \put(0.91970427,0.23006062){\color[rgb]{0,0,0}\makebox(0,0)[lt]{\lineheight{1.25}\smash{\begin{tabular}[t]{l}$G_{2}$\end{tabular}}}}%
    \put(0.92095882,0.44709975){\color[rgb]{0,0,0}\makebox(0,0)[lt]{\lineheight{1.25}\smash{\begin{tabular}[t]{l}$G_{3}$\end{tabular}}}}%
    \put(0.91970427,0.68044816){\color[rgb]{0,0,0}\makebox(0,0)[lt]{\lineheight{1.25}\smash{\begin{tabular}[t]{l}$G_{4}$\end{tabular}}}}%
    \put(0,0){\includegraphics[width=\unitlength,page=3]{bstree.pdf}}%
    \put(0.80996066,0.11655659){\color[rgb]{0,0,0}\makebox(0,0)[lt]{\lineheight{1.25}\smash{\begin{tabular}[t]{l}$G_{1}$\end{tabular}}}}%
    \put(0.81136559,0.32983203){\color[rgb]{0,0,0}\makebox(0,0)[lt]{\lineheight{1.25}\smash{\begin{tabular}[t]{l}$G_{2}$\end{tabular}}}}%
    \put(0.81190955,0.55816219){\color[rgb]{0,0,0}\makebox(0,0)[lt]{\lineheight{1.25}\smash{\begin{tabular}[t]{l}$G_{3}$\end{tabular}}}}%
  \end{picture}%
\endgroup%